\documentclass[11pt,english,reqno]{amsart}

\usepackage{latexsym}
\usepackage{mathrsfs}
\usepackage{epsfig, graphicx}
\usepackage{graphics,epsfig}

\usepackage{amsmath, appendix, ulem,amsfonts}
\usepackage[alphabetic,nobysame]{amsrefs}
\usepackage{amssymb, color}
\usepackage[margin=1in]{geometry}

\usepackage{subfigure}

\usepackage{graphicx}
\usepackage{bm}
\usepackage{subeqnarray}
\usepackage[norelsize]{algorithm2e}
\usepackage{algpseudocode}
\usepackage{cases}

\theoremstyle{plain}
\newtheorem{theorem}{Theorem}

\theoremstyle{definition}

\newtheorem{remark}[theorem]{Remark}

\topmargin 10pt \headheight 20pt \headsep 10pt \oddsidemargin 0pt
\parskip 1pt
\textheight 8.5in \textwidth 6.5in \numberwithin{equation}{section}

\title[Implicit AP method for Linear Transport Equation]{Implicit asymptotic preserving method for linear transport equations}

\author{Qin Li} 
\address{Mathematics Department, University of Wisconsin-Madison, 480 Lincoln Dr., Madison, WI 53705}
\email{qinli@math.wisc.edu}
\author{Li Wang}
\address{Departments of Mathematics, State University of New York at Buffalo, 244 Mathematics Building, Buffalo, NY 14260}
\email{lwang46@buffalo.edu}

\date{\today}

\thanks{We would like to thank Professors Shi Jin, Jim Morel and Cory Hauck for fruitful discussions. And we would also like to acknowledge the generous support from KI-net during the second author's visit at UW-Madison.}

\newcommand{\eps}{\varepsilon}
\newcommand{\half}{\frac{1}{2}}

\newcommand{\RR}{{\mathbb{R}}}
\newcommand{\Amat}{{\mathsf{A}}}
\newcommand{\Bmat}{\mathsf{B}}
\newcommand{\Cmat}{\mathsf{C}}
\newcommand{\Pmat}{\mathsf{P}}
\newcommand{\Imat}{\mathsf{I}}
\newcommand{\Aop}{{\mathcal{A}}}
\newcommand{\Bop}{\mathcal{B}}
\newcommand{\Cop}{\mathcal{C}}
\newcommand{\Pop}{\mathcal{P}}

\newcommand{\Lop}{\mathcal{L}}
\newcommand{\Dop}{\mathcal{D}}

\newcommand{\dt}{\partial_t}
\newcommand{\dx}{\partial_x}

\newcommand{\average}[1]{\left\langle#1\right\rangle}

\newcommand{\Average}[1]{ \vert\!\vert\!\vert#1 \vert\!\vert\!\vert}
\newcommand{\tnorm}[1]{\|#1\|}

\newcommand{\Dx}{\Delta x}
\newcommand{\Dt}{\Delta t}

\begin{document}
\maketitle
\begin{abstract}
The computation of the radiative transfer equation is expensive mainly due to two stiff terms: the transport term and the collision operator. The stiffness in the former comes from the fact that particles (such as photons) travel at the speed of light, while that in the latter is due to the strong scattering in the diffusive regime. We study the fully implicit scheme for this equation to account for the stiffness. The main challenge in the implicit treatment is the coupling between the spacial and velocity coordinates that requires the large size of the to-be-inverted matrix, which is also ill-conditioned and not necessarily symmetric. Our main idea is to utilize the spectral structure of the ill-conditioned matrix to construct a pre-conditioner, which, along with an exquisite split of the spatial and angular dependence, significantly improve the condition number and allows matrix-free treatment. We also design a fast solver to compute this pre-conditioner explicitly in advance. Meanwhile, we reformulate the system via an even-odd parity, which results in a symmetric and positive definite matrix that can be inverted using conjugate gradient method. This idea can also be implemented to the original non-symmetric system whose inversion is solved by GMRES. A qualitative comparison with the conventional methods, including Krylov iterative method pre-conditioned with diffusive synthetic acceleration and asymptotic preserving scheme via even-odd decomposition, is also discussed. 
\end{abstract}

\section{Introduction}
The linear transport equation describes the physical process of interaction of radiation with background material such as radiative transfer, neutron transport, and etc. It often contains a diffusive scaling that accounts for the strong scattering effect and leads to diffusion equations. On the contrary, when the scattering is weak, the propagation of radiation is almost a free transport with the speed of light, which is named as free streaming limit. In practice, the material usually contains both strong and weak scattering regimes, and thereby it is desirable to design a numerical method that is uniformly accurate in both cases without resolving the small scales. 

Asymptotic preserving (AP) schemes arise to serve this purpose. As the name states, it preserves the asymptotic limit at the discrete level. More specifically, such method, when applied to certain equations with small parameters, should automatically becomes a stable solver for the corresponding limit equations without resolving the mesh size and time step. In the context of steady neutron transport, AP scheme was first studied by Larsen and Morel~\cite{LM89} and then Jin and Levermore~\cite{JL93}, and Klar \cite{Klar98}. A rigorous convergence analysis was subsequently carried out by Golse, Jin and Levermore~\cite{GJL99}. For time-dependent transport problem, a decomposition of the distribution function is often performed, either via a macro-micro decomposition~\cite{LM08} or an even-odd decomposition~\cite{JPT00, KFJ15}. Upon such decomposition, the stiff and non-stiff term get separate off and an implicit-explicit scheme is applied to have the two terms treated respectively. The idea was later extended to a higher order implementation by Boscarino et. al.~\cite{BPR13}. Another related approach is termed the unified gas kinetic scheme framework, which was first proposed by Mieussens for linear transport equation~\cite{Mieussens13} and recently modified by Sun et. al to treat the nonlinear problem~\cite{SJX15}. 

Nevertheless, most AP schemes mentioned above, still suffer from a restrictive parabolic CFL condition that comes from the diffusion limit. Additionally, they are designed for diffusive scaling and thus may fail to capture the free streaming limit at which the radiation travels at the speed of light. Therefore, a fully implicit method is desired to remedy both problems. However, the main challenge that prevents researchers from directly applying the fully implicit method is the inversion of a large size of matrix due to the high dimension and the coupling between the spatial and velocity coordinates. Even worse, this matrix is often ill-conditioned in some regimes and makes the inversion impractical. 

Constant efforts have been made in the past few decades towards developing efficient implicit solver, both deterministic and stochastic. For deterministic method, the current state-of-the-art is the Krylov iterative method for the discrete-ordinate system preconditioned by diffusion synthetic acceleration (DSA) \cite{WWM04}. Besides the ray-effects \cite{MWLP03} that generates by the discrete ordinate method, this preconditioned Krylov iteration has two drawbacks. One comes from the sweep which is used to invert the convection term. As it is implemented along the direction of meshes, it is hard to implement in parallel and on unstructured spatial grid in high dimensions. The other is due to the complication of the method. Since each time iteration includes a sub-iterations of sweeps and diffusion solvers for DSA pre-conditioner,  it is very complicated to extend to nonlinear case.  For stochastic method, the most widely used one is the implicit Monte Carlo (IMC) method \cite{FC71}. But it is expensive because accurate solutions require an adequate sampling of points in phase space, and it also suffers from unavoidable fluctuations. Parallel to these two approaches is the development of moment method, which approximates the solution in terms of spherical harmonics, and results in a moment system named $P_N$ or $SP_N$ system. The foundation of this approximation was laid in \cite{Pomraning93, Larsen80}, and later an explosion of new developments emerges in two sub areas: one emphasis on the derivation of simpler but accurate version of moment system such as \cite{TL96, BL01, FS11, OLS13} and the other focuses primarily on designing applicable numerical methods such as \cite{FKLY07, Hauck11}, to name just a few.

In this paper, we develop a highly efficient implicit solver for the linear transport equation that is uniformly accurate across different regimes: diffusive and free streaming. The main idea is to build a pre-conditioner based on the spectral structure of the collision, which is the main source of ill-conditioning. Then we reformulate the original system using an even-odd decomposition such that the resulting matrix to be inverted, upon discretization, is symmetric and positive definite, and thereby the conjugate gradient method can be used. This method is amenable to parallelization, and can be easily extended to anisotropic scattering. 

In the following, we recall two state-of-the-art schemes that motivates our idea and will be compared to some extent with our newly designed scheme. To begin with, we summarize the typical equations we consider. In slab geometry, the radiation intensity $f(t,x, \mu)$ solves the following dimensionless equation: 
\begin{equation} \label{system: originalTransport}
\dt f + \frac{\mu}{\eps} \dx f = \frac{\sigma}{\eps^2}(\rho - f)\,,
\end{equation}
where $\sigma (x) $ is the scattering cross-section and $\eps$ is the Knudsen number. $\mu$ is the cosine of the angle between a particle's direction of flight and the $x$-axis. Here the external source and absorption effect are neglected for simplicity. $\rho = \average{f} = \frac{1}{2}\int_{-1}^1 f d\mu$ denotes the density. In the zero limit of $\eps$, one has:
\begin{equation} \label{eqn:average}
\dt \rho - \dx \left( \frac{1}{3\sigma} \dx \rho\right) = 0\,,
\end{equation}
which is termed the diffusion limit. In planar geometry, the equation rewrites as
\begin{equation} \label{transport_2D}
\dt f + \frac{\Omega}{\eps} \cdot \nabla_x f = \frac{\sigma}{\eps^2}(\rho - f)\,,
\end{equation}
where $\Omega$ is the angular variable representing the two direction-of-flight; $\Omega = (\xi, \eta)$, $-1 \leq \xi, \eta \leq 1$ and $\eta^2 + \xi^2 = 1$. Here $\rho = \frac{1}{2\pi} \int_{|\Omega| = 1} f d\Omega $, and the corresponding diffusion limit is 
\begin{eqnarray} \label{diff_2D}
\rho_t - \frac{1}{2} \nabla \cdot \left( \frac{1}{\sigma} \nabla \rho \right) = 0.
\end{eqnarray}

\subsection{Preconditioned Krylov method}
The preconditioned Krylov method is born from $S_N$ source iteration (SI) with diffusion synthetic acceleration (DSA), but outperformed SI in computing diffusive problems with discontinuous material properties. We first recall SI with DSA to put it in the context \cite{AL02}. Consider a semi-discrete version of (\ref{system: originalTransport})
\begin{equation}\label{si-1}
f^{n+1} + \frac{\mu \Dt}{\varepsilon} \partial_xf^{n+1} - \frac{\sigma \Dt}{\varepsilon^2}(\rho^{n+1} - f^{n+1}) = f^n\,.
\end{equation}
where both the convection and scattering terms are treated implicitly, which results in an inversion of a large linear system. Due to the sparsity of the system, SI solves it iteratively that resembles the Richardson iteration. Denote 
\begin{eqnarray}
\mathcal{L} = 1 + \frac{\mu \Dt }{\eps} \partial_x+ \frac{\sigma \Dt}{\eps^2},  \qquad  \mathcal{P} = \average{\cdot} 
\end{eqnarray}
as two operators. Given $f^n$ and $\rho^n$ the solution at time $t^n$, let $f^{(l)}$ be the approximation of $f^{n+1}$ after $l$ iterations, then it solves
\begin{equation}\label{si-2}
\Lop f^{(l+1)}= \frac{\sigma \Delta t}{\varepsilon^2} \Pop f^{(l)}  + f^n \,.
\end{equation}
which is the main iteration in SI.  When $\eps$ is small, the convergence of $f^{(l)}$ is rather slow; this is because the decreasing rate of the error $\frac{\delta f^{(l+1)}}{\delta f^{(l)}}$  (where $\delta f^{(l)} = f - f^{(l)}$ and $f$ denotes the exact solution to (\ref{si-1})) is close to one. Then DSA helps to accelerate the convergence. Specifically, consider the equation for $\delta f^{(l)}$:
$$ \Lop (\delta f^{(l+1)}) = \frac{\sigma \Dt}{\eps^2} \Pop(\delta f^{(l+1)}) + \frac{\sigma \Dt}{\eps^2}\left( \rho^{(l+1)} - \rho^{(l)}\right),$$
here $\rho^{(l)} = \Pop(f^{(l)})$ and  $\delta \rho^{(l)} = \Pop(\delta f^{(l)})$. This equation is as hard to solve as (\ref{si-1}) and the key idea is to use a diffusive approximation
$$\delta \rho^{(l+\half)} - \frac{\Dt}{3} \dx \left( \frac{1}{\sigma} \dx \delta \rho^{(l+\half)} \right) =  \frac{\sigma \Dt}{\eps^2}  \left( \rho^{(l+\half)} - \rho^{(l)} \right),$$
which is much easier to compute. Then SI with DSA can be summarized as follows. 
\begin{equation} \label{SI-DSA}
\left\{
\begin{aligned}
&f^{(l + \half)}= \frac{\sigma \Delta t}{\varepsilon^2}  \Lop^{-1} \rho ^{(l)} + \Pop \Lop^{-1} f^n \, ,
\\
&\delta \rho^{(l+\half)}= \frac{\sigma \Delta t}{\eps^2}(1+\Dop)^{-1} \left( \rho^{(l+\half)} - \rho^{(l)} \right),
\\
&\rho^{(l+1)} = \rho^{(l+\half)} + \delta \rho^{(l+\half)},
\end{aligned}
\right.
\end{equation}
where $\delta \rho^{(l+\half)} = \rho - \rho^{(l+\half)}$ and $\rho$ is again the exact solution to (\ref{si-1}) and $\Dop = -\frac{\partial}{\partial_x}\left( \frac{\Dt}{3\sigma} \frac{\partial}{\partial_x}\right)$ is the diffusion operator.

As DSA is shown to be inefficient in heterogeneous multi-dimensional calculations \cite{Azmy02}, a Krylov method is developed with DSA as a preconditioner. The idea is to use more advanced iterative technique than Richardson iteration to solve
\begin{eqnarray} \label{pk-1}
\left( \mathcal{L} - \frac{\sigma \Dt}{\eps^2} \mathcal{P} \right) f^{n+1} = f^n.
\end{eqnarray}
As mentioned above, the condition number of the corresponding discretization matrix of $ \mathcal{L} - \frac{\sigma \Dt}{\eps^2} \mathcal{P}$  can be very big when $\eps$ is small, and thus a preconditioner is needed. Although an optimal preconditioner is not necessarily obvious, a good choice \cite{WWM04} would be to first multiply (\ref{pk-1}) by the operator $\mathcal{P}\mathcal{L}^{-1}$, which leads to
\begin{eqnarray}\label{pk-2}
\left( 1 - \frac{\sigma \Dt}{\eps^2} \mathcal{P} \mathcal{L}^{-1}\right) \rho^{n+1} = \mathcal{P}\mathcal{L}^{-1} f^n.
\end{eqnarray}
Then an operator including a diffusion is expected to further precondition the above system. Note from a manipulation of the equations in (\ref{SI-DSA}) and a comparison with (\ref{si-2}), a reasonable pre-conditioner is $1 +  \frac{\sigma \Dt}{\eps^2} (1+\Dop)^{-1}$. Multiply it with~\eqref{pk-2}, one has
\begin{eqnarray}\label{pk-3}
\left(1 +  \frac{\sigma \Dt}{\eps^2} (1+\Dop)^{-1} \right) \left( 1 - \frac{\sigma \Dt}{\eps^2} \mathcal{P} \mathcal{L}^{-1}\right) \rho^{n+1} = \left(1 +  \frac{\sigma \Dt}{\eps^2} (1+\Dop)^{-1} \right) \mathcal{P}\mathcal{L}^{-1} f^n,
\end{eqnarray}
whose discrete version can now be solved by Krylov method such as GMRES. Preconditioned Krylov method has been applied to linear transport equation in various contexts, please refer to \cite{ABDH95, Brown95, GHP99} for an extensive study.

\subsection{Asymptotic preserving scheme via even-odd decomposition}
Another class of methods aim at capturing the diffusion limit as $\eps$ vanishes without resolving the mesh size. This is the spirit of so-called asymptotic preserving method. Various asymptotic preserving methods have been proposed for transport equation with diffusive scaling, and here we only recall the method proposed by Jin, Pareschi and Toscani \cite{JPT00} based on an even-odd decomposition as it motives our new method. Define an even and an odd part of $f$ by:
\begin{equation*}
f_\text{E} = \frac{1}{2}\left(f(t,x,\mu) +f(t,x,-\mu)\right)\,,\quad f_\text{O} = \frac{1}{2\eps}\left(f(t,x,\mu) - f(t,x,-\mu)\right)\,,
\end{equation*}
then (\ref{system: originalTransport}) splits into
\begin{equation} \label{JPT-1}
\begin{cases}
\partial_t f _\text{E}+  \mu\partial_xf_\text{O} = \frac{\sigma}{\varepsilon^2}(\rho - f_\text{E}),\\
\partial_t f_\text{O} + \alpha \mu\partial_xf_\text{E} = -\frac{\sigma}{\varepsilon^2}f_\text{O} + \frac{1}{\eps^2} (1-\alpha \eps^2 ) \mu \dx f_\text{E},
\end{cases}\,
\end{equation}
where $\alpha = \min\{1, \frac{1}{\eps^2}\}$. In this setting, two scales --- $\frac{1}{\eps}$ and $\frac{1}{\eps^2}$ --- in the original system has been unified to one scale in (\ref{JPT-1}) such that the non-stiff terms can now be treated explicitly while the stiff terms are treated implicitly. 

\vspace{0.3cm}

The rest of paper is organized as follows. In the next section, we explain at large the challenges in the implicit method and elucidate our idea---constructing a preconditioning matrix---by analyzing the spectral structure of the matrix that will be inverted. Detailed pseudo code is also provided. Section 3 is devoted to checking some properties of our scheme, especially the computational cost, asymptotic preservation and unconditionally stability. In Section 4 we present several numerical examples to illustrate the efficiency, accuracy, and AP properties of the schemes. Finally, the paper is concluded in Section 5.

\section{Numerical scheme}
Since our purpose is to capture both the diffusion and free streaming limit of (\ref{system: originalTransport}) (or \ref{transport_2D}) without the parabolic CFL constraint, we will take the fully implicit time discretization. However, as mentioned above, this amounts to invert a large scale matrix whose condition number is not necessarily acceptable. Below, we will first explain our main idea in
\begin{itemize}
\item utilizing the spectral structure of the collision operator to construct the preconditioner for the ill-conditioned matrix;
\item reducing the size of the matrix by separating the spatial dimension from the velocity dimension by building the sparsity structure into a matrix-free form.
\end{itemize}
Then a detailed algorithm is given in the next subsection. The idea is presented for the isotropic collision case and the anisotropic treatment will be demonstrated in Section 2.3.
  
\subsection{Challenges and ideas} \label{sec: ideas}
Recall the fully implicit semi-discretization of the equation in one dimension:
\begin{equation}\label{scheme: pen1}
\frac{f^{n+1}-f^n}{\Delta t} + \frac{\mu}{\varepsilon} \partial_xf^{n+1} = \frac{\sigma}{\varepsilon^2}(\rho^{n+1} - f^{n+1})\, ,
\end{equation}
where $f^n = f(t^n,x,\mu)$.
Here because of the convection, the stiff parts are asymmetric. To make it symmetric, we define an even-odd parity of the solution by:
\begin{equation*}
f_\text{E} = \frac{1}{2}\left(f(t,x,\mu) +f(t,x,-\mu)\right)\,,\quad f_\text{O} = \frac{1}{2}\left(f(t,x,\mu) - f(t,x,-\mu)\right)\,,
\end{equation*}
then the equation (\ref{scheme: pen1}) splits:
\begin{equation}
\begin{cases}
\frac{f^{n+1}_\text{E}-f^n_\text{E}}{\Delta t} + \frac{1}{\varepsilon} \mu\partial_xf^{n+1}_\text{O} = \frac{\sigma}{\varepsilon^2}(\rho^{n+1} - f^{n+1}_\text{E})\\
\frac{f^{n+1}_\text{O}-f^n_\text{O}}{\Delta t} + \frac{1}{\varepsilon} \mu\partial_xf^{n+1}_\text{E} = -\frac{\sigma}{\varepsilon^2}f^{n+1}_\text{O}
\end{cases}\,.
\end{equation}
From the second equation it is easy to get:
\begin{equation}\label{eqn:f_O_new}
f^{n+1}_\text{O} = \frac{\varepsilon^2}{\varepsilon^2+  \sigma \Delta t}\left(f^n_\text{O} - \frac{\Delta t}{\varepsilon} \mu \partial_xf^{n+1}_\text{E}\right)\,,
\end{equation}
and plug it back into the first equation we get the updating formula for $f^{n+1}_\text{E}$:
\begin{equation} \label{eqn:f_E_new}
- \dx \left( \frac{\eps^2 \mu^2 \Dt}{ \eps^2 + \sigma \Dt}  \dx f_\text{E}^{n+1}\right) 
+ \left( \frac{\eps^2}{\Dt} + \sigma \right) f_\text{E}^{n+1} - \sigma \rho^{n+1} 
= \frac{\eps^2}{\Dt} \left[ f_\text{E}^n - \dx \left( \frac{\eps \mu \Dt}{\eps^2+ \sigma \Dt} f_\text{O}^n\right) \right].
\end{equation}
Getting $f^{n+1}_\text{O}$ is easy once $f^{n+1}_\text{E}$ is known but solving~\eqref{eqn:f_E_new} is challenging. In a more compact form (\ref{eqn:f_E_new}) writes as:
\begin{equation}\label{eqn:ABinverse}
\left(\Aop+\Bop\right)f = b\,,
\end{equation}
with
\begin{equation}\label{eqn:ABb}
\Aop = -\frac{\varepsilon^2\Delta t}{\varepsilon^2+\Delta t}\mu^2\partial_x^2\,,
\quad \Bop = 1 +\frac{\varepsilon^2}{\Delta t} - \Pop\,,\quad\text{and}\quad b = \frac{\varepsilon^2}{\Delta t}\left(f^n_\text{E} - \frac{\varepsilon\Delta t}{\varepsilon^2+\Delta t} \mu \partial_xf^n_{O}\right)\,.
\end{equation}
Here we assume $\sigma \equiv 1$ for the moment to better illustrate the idea. 
We also used $\Pop f= \langle f\rangle$. Upon discretization, denote $\Amat$ and $\Bmat$ as the corresponding matrices for operators $\Aop$ and $\Bop$, then one needs to invert $\Amat+\Bmat$ for the solution and the difficulty is two folds:
\begin{itemize}
\item The size of the system. In what follows, we use an $N_v$-point quadrature and let $N_x$ be the number of spatial discretization points in space. Then $f$ is a vector of length $N_xN_v$ and matrices $\Amat$ and $\Bmat$ are of size $(N_x \times N_v)^2$. Inverting a matrix of such size is unrealistic especially in high dimensions if we think of, for example, dimension three, in which matrices reach a size of $(N_x^3 \times N_v^2)^2$. What is more, storing such matrices numerically requires memory that exceed what is affordable. However, the matrices enjoy some good properties that we could embed in the numerical schemes to save the memory and computational cost.
\begin{itemize}
\item Both $\Amat$ and $\Bmat$ are symmetric.
\item Both $\Amat$ and $\Bmat$ are positive definite. The eigenvalues of $\Bmat$ is either $1+\frac{\varepsilon^2}{\Delta t}$ or $\frac{\varepsilon^2}{\Delta t}$ and since $\Aop$ is an elliptic operator, the positivity of $\Amat$ is out of question.
\item  Both $\Amat$ and $\Bmat$ are block-wisely sparse. In particular, $\Amat$ is sparse in both coordinates while $\Bmat$ is dense in $\mu$ but sparse in $x$ coordinate. We can even write down its explicit expression:
\begin{equation} \label{eqn:matrices}
\Amat = \Amat_x\otimes \text{diag}\{\mu_k^2\}\,,\quad \Bmat = \Imat_{N_x}\otimes\Bmat_\mu\,.
\end{equation}
Here $\otimes$ denotes the Kronecker product, and we have used the notation $\Amat_x$ for the discretization of $\partial^2_x$, $\Bmat_\mu = \left(\Imat_{N_v}+\frac{\varepsilon^2}{\Delta t}-\Pmat\right)$ and the subindex for $\Imat$ indicate the size of this identity matrix. $\Pmat$ corresponds to the averaging operator $\Pop$ and is a dense matrix.
\end{itemize}

\item Ill-conditioning of the matrix. The matrix is well-conditioned in kinetic regime when $\varepsilon$ is relatively big ($\mathcal{O}(1)$), but in the diffusive regime when $\varepsilon\to 0$, roughly speaking the eigenvalues of $\Amat$ are very small at about $\varepsilon^2\mu^2k^2$ with $k=1\,,\cdots,N_x$, and thus the spectrum of the system is dominated by that of $\Bmat$. For a symmetric matrix, the condition number is the ratio between the biggest and the smallest eigenvalue, and thus it is $ \frac{1+\frac{\eps^2}{\Dt}}{\frac{\eps^2}{\Dt}} \sim \frac{\Delta t}{\varepsilon^2}\to\infty$ for fixed discretization in the diffusion limit. However, though $\Amat+\Bmat$ is ill-conditioned, the dominating part $\Bmat$ has very clear eigenspace structure. It shrinks constant vector by $\frac{\varepsilon^2}{\Delta t}<1$ in length and elongates all the vectors in the perpendicular space by $1+\frac{\varepsilon^2}{\Delta t}>1$. Specifically, we can write down the following eigendecomposition for $\Bmat_\mu$:
\begin{equation}\label{eqn:B_v_expansion}
\Bmat_\mu = \frac{\varepsilon^2}{\Delta t}ee^t + \left(1+\frac{\varepsilon^2}{\Delta t} \right)\sum_{i\geq 2}v_iv_i^t\,.
\end{equation}
Here we have used the notation $e = \frac{1}{\sqrt{N_v}}[1,\cdots,1]^t$ and $\RR^{N_v} = \text{span}\{e\,,v_i\,,\text{ for }i\geq 2\}$, where $e$ and all $v_i$ are orthonormal, meaning $\mathsf{X} = [e,v_2,\cdots,v_{N_v}]$ forms a unitary matrix.
\end{itemize}

\begin{remark}\label{remark: quadrature}
Note that the above properties for $\Amat$ and $\Bmat$ hold regardless of how we discretize the operator $\Aop$ or the quadrature of $\Bop$ except for the eigendecomposition (\ref{eqn:B_v_expansion}). However, only slight changes need to make to account for different quadratures. Denote $(\mu_1, \mu_2, \cdots \mu_{N_v})$ the quadrature points and $(w_1, w_2, \cdots w_{N_v})$ the corresponding weight, then $B_\mu$ can be decomposed as 
\begin{equation}\label{eqn:B_v_expansion2}
\Bmat_\mu = \frac{\varepsilon^2}{\Delta t} \sqrt{w}^{-1} v_1 v_1^t \sqrt{w} + \left(1+\frac{\varepsilon^2}{\Delta t} \right)\sum_{i\geq 2} \sqrt{w}^{-1} v_i v_i^t \sqrt{w}\, ,
\end{equation}
where $\sqrt{w} = \text{diag}( \sqrt{w_1}, \sqrt{w_2} , \cdots, \sqrt{w_{N_v}})$ and $\sqrt{w}^{-1}$ is the inverse of $\sqrt{w}$. Here $\RR^{N_v} = \text{span}\{ v_j \}$ for $1\leq j \leq N_v$ with $v_1 = [\sqrt{w_1}, \cdots, \sqrt{w_{N_v}}]^t$, and $v_j, ~ 1\leq j \leq N_v$ are orthogonal with respect to matrix $w = \text{diag}(w_1, \cdots w_{N_v})$, i.e., $v_i^t w v_j = \delta_{ij}$. Indeed, when we consider mid point rule on a uniform mesh, $w_i = \frac{1}{N_v}$ for all $i$ and (\ref{eqn:B_v_expansion2}) boils down to (\ref{eqn:B_v_expansion}).
\end{remark}

\begin{remark} 
The previous two numerical methods in the introduction partially used these good properties. The Krylov iteration method used the block-wisely sparse property of $\Amat$ and $\Bmat$ but moves the $\Pmat$ term to the right hand side, therefore suffering from slow convergence rate in the diffusion regime due to the canceling of $\Imat$ and $\Pmat$ in the spectrum. The even-odd parity decomposition explicitly uses the eigenvalue structure of $\Bmat$ and directly write the odd part as the $\mathcal{O}(\varepsilon)$ term, but since $\Amat$ term gets treated explicitly, the method could not overcome the parabolic scaling, meaning $\Delta t$ is controlled by $\Delta x^2$.
\end{remark}

\subsection{Numerical method} \label{subsection: numerical method}
Now, we design our algorithm by making full use of the properties of the two matrices presented above. Noting that $\Amat+\Bmat$ is positive definite, symmetric, and sparse, therefore conjugate gradient method is a fast solver for computing the inverse. But a pre-conditioner is called for to take care of the ill-conditioning. Usually once a pre-conditioner is involved, the sparsity would be lost, but here our pre-conditioner will not introduce us much trouble as it is treated in the matrix-free fashion and only a few number of flops are needed, as will be explained below.

Simply put, the pre-conditioner we use is $\Bmat^{-1}$, and then the system (\ref{eqn:ABinverse}) becomes:
\begin{equation} \label{system-AB}
(\Bmat^{-1}\Amat + \Imat ) f = \Bmat^{-1}b\,.
\end{equation}
It is easy to show that $\Bmat^{-1}\Amat + \Imat$ is symmetric positive definite, and thus conjugate gradient method can be used, which brings a need for fast matrix-vector multiplication solver. This is, however, not straightforward. Indeed, note that the matrix $\Bmat^{-1}$ is only block-wise sparse (much denser as compared to $\Bmat$), and if we use a naive multiplication, it will take part in a large portion of computation effort and degrades the advantage of using conjugate gradient method. To deal with this problem, we notice that $\Bmat_\mu^{-1}$ has a special structure so that it can be decomposed in such a way that mimics a low-rank decomposition. Specifically, let $g(x,\mu)$ be a function depends on both $x$ and $\mu$. Then upon discretization, $\Bmat_\mu^{-1} g$ can be directly expressed by, as shown in~\eqref{eqn:B_v_expansion}:
\begin{equation}\label{eqn:B_v_inv_expansion}
\Bmat^{-1}_\mu g = \frac{\Delta t}{\varepsilon^2}ee^t g + \frac{1}{1+\frac{\varepsilon^2}{\Delta t}}\left(g-ee^t g\right)\,
\end{equation}
for a fixed $x$, and then $\Bmat^{-1} g$ is to apply the equation above for every $x$. The multiplication with $\Amat$ is cheap due to sparsity. 

\begin{remark}
As explained in Remark \ref{remark: quadrature}, when we consider a different quadrature, (\ref{eqn:B_v_inv_expansion}) generalizes to  
\begin{equation} \label{eqn:B_v_inv_expansion2}
\Bmat^{-1}_\mu g = \frac{\Delta t}{\varepsilon^2} \sqrt{w}^{-1}v_1v_1^t \sqrt{w} g + \frac{1}{1+\frac{\varepsilon^2}{\Delta t}}\left(g-  \sqrt{w}^{-1}v_1 v_1^t \sqrt{w} g\right)\,
\end{equation}
where $v_1$ and $\sqrt{w}$ are defined in Remark \ref{remark: quadrature}.
\end{remark}

We summarize the algorithm below in Algorithm 1. 
\RestyleAlgo{boxruled}
\begin{algorithm}
\KwData{Initial data: $f(t=0,x,v)$, final time $T$}
\KwResult{$f(t_n,x,v)$ for all time steps $t_n$.}
Discretization: $\Delta t$, $\Delta v$, $\Delta x$\;
Initialization: $t=0$, $f_\text{E}(0,x,v) = \frac{1}{2}\left(f(0,x,v)+f(0,x,-v)\right)$, $f_\text{O} = f-f_\text{E}$\;
Matrix Preparation: $\Amat_x$, $e$\;
\While{$t<T$}{Set $b$ as in~\eqref{eqn:ABb}\; Compute $f_{E}$ using~\eqref{eqn:ABinverse}, call function {\bf ABinverse}\; Compute $f_\text{O}$ using~\eqref{eqn:f_O_new} directly\;Set $t = t+\Delta t$\;
}
\caption{Outline of the {\bf main} algorithm}\label{alg:main}
\end{algorithm}

\RestyleAlgo{boxruled}
\begin{algorithm}
\KwData{Initial data: $b$.}
\KwResult{$f$.}
Set $b_\text{mod} = \Bmat^{-1}_vb$ using~\eqref{eqn:B_v_expansion}\;
Compute $\Bmat^{-1}(\Amat+\Bmat)f = \Bmat^{-1}b$ using CG, in which we call function~{\bf ABproduct}\;
\caption{Function {\bf ABinverse}: solving $(\Amat+\Bmat)f = b$}\label{alg:ABinverse}
\end{algorithm}
\RestyleAlgo{boxruled}
\begin{algorithm}
\KwData{Initial data: $b$.}
\KwResult{$f = \Bmat^{-1}(\Amat + \Bmat)b$.}
Compute $b_1 = \Amat\cdot b$\;
Compute $b_2 = \Bmat^{-1}\cdot b_1$ using~\eqref{eqn:B_v_inv_expansion}\;
Compute $f = b_2 + b$\;
\caption{Function {\bf ABproduct}: computing $f = \Bmat^{-1}(\Amat + \Bmat)b$}\label{alg:ABproduct}
\end{algorithm}

Note that in Algorithm~\ref{alg:ABinverse}, CG is called which requires the matrix product of $\Bmat^{-1}(\Amat+\Bmat)$. But it is not applied directly. Instead, we applied function~{\bf ABproduct}. It is also worth mentioning that construction of $\Amat$ can be done with great generality as neither the computation of our pre-conditioner nor the fast matrix-vector multiplication technique here depends on the specific form of spatial discretization. In the examples we will show later, we use center difference for spatial derivatives and mid-point rule on a uniform gird for the averaging operator in $\mu$. But other quadrature rule such as Gaussian quadrature used in $S_N$ approximations \cite{AL02} and various spatial discretization such as discontinuous Galerkin method \cite{OHF12}  or discontinuous finite-element method \cite{WMMP01} can be easily adapted. Now, it is important to point out that our algorithm, majorly solving the preconditioned system (\ref{system-AB}), only requires a three simple matrix-vector multiplications, two with $\Bmat^{-1}$ and one with $\Amat^{-1}$, in each iteration, whereas in the previous method (\ref{pk-3}), quite a few operations are needed: a diffusion solver, a transport sweep and an average calculation, not to mentioned some of them are needed more than twice. Therefore the main benefit of our method is its simplicity.

\begin{remark} \label{remark:GMRES}
The above idea of constructing and computing the pre-conditioner can be applied directly to the original system (\ref{scheme: pen1}) without symmetrization technique via the even-odd parity. Indeed, rewrite (\ref{scheme: pen1}) in a compact form
\begin{equation}
(\Cop + \Bop) f^{n+1} = d
\end{equation}
where $\Cop = \eps \mu \dx$, $\Bop$ is the same as (\ref{eqn:ABb}), and $d = \frac{\eps^2}{\Dt} f^n$. Here we again assume $\sigma \equiv 1$ for simplicity. Then it boils down to invert the discretization matrix $\Cmat+ \Bmat$, which can be done similarly using the previous framework expect changing conjugate gradient method to Generalized Minimal Residual method (GMRES). 
\end{remark}

\begin{remark}\label{remark: 2nd}
Our scheme is easily extended to second order. As a center difference is used for the spatial derivative, we only need to upgrade our time discretization to second order. This is done by replacing (\ref{scheme: pen1}) with
\begin{eqnarray}
\frac{3f^{n+1}-4f^n+f^{n-1}}{2\Delta t} + \frac{1}{\eps} \mu\cdot \partial_x f^{n+1} = \frac{\sigma}{\eps^2} (\rho^{n+1} - f^{n+1}).
\end{eqnarray}
Then the rest steps are the same as that described in Section \ref{subsection: numerical method} except varying a few constants.
\end{remark}

\subsection{Anisotropic scattering}
In this subsection, we generalize the above framework to the anisotropic scattering. In one dimensional slab geometry, the transport equation takes the form
\begin{equation}
\dt f + \frac{\mu}{\eps} \dx f = \frac{\sigma_0(x)}{2\eps^2 } \int_{-1}^1 \sigma(\mu \cdot \mu') (f(\mu') -  f(\mu))  d\mu',
\end{equation}
where $\frac{1}{2}\int_{-1}^1 \sigma(\mu\cdot \mu') d\mu' = 1$. In this case, the diffusion limit is 
\begin{eqnarray} \label{diff-aniso}
\rho_t  + \frac{1}{2}\int_{-1}^{1}   \mu \partial_x   \left(  \mathcal{Q}^{-1} \left( \frac{\mu \partial_x \rho}{\sigma_0(x)}  \right)  \right) d\mu = 0,
\end{eqnarray}
where $\mathcal{Q}(f) = \frac{1}{2}\int_{-1}^1 \sigma(\mu\cdot \mu') f(\mu') d\mu' - f(\mu)$ is the linear collision operator. The main idea of the numerical scheme follows that in Section \ref{sec: ideas}, and here we present a non-symmetric version as mentioned in Remark \ref{remark:GMRES}, a reformulation to symmetric form is similar to (\ref{eqn:f_O_new})--(\ref{eqn:f_E_new}) and we leave it to the reader. 

Denote 
\begin{eqnarray}\label{Pop_sigma}
\Pop^\sigma f = \frac{1}{2}\int_{-1}^1 \sigma(\mu\cdot \mu') f(\mu') d\mu',
\end{eqnarray}
and assume $\sigma_0(x) \equiv 1$ for simplicity, then the semi-discrete scheme reads
\begin{eqnarray} \label{imp-aniso}
(\Bmat^\sigma + \Cmat) f^{n+1} = b.
\end{eqnarray}
Here $\Bmat^\sigma$ and $\Cmat$ are the discretization matrices for operator $1+\frac{\eps^2}{\Delta t} - \Pop^\sigma$ and $\eps \mu \partial_x$, respectively, and $d = \frac{\eps^2}{\Delta t} f^n$. Write $\Bmat^\sigma = \Imat_{N_x} \otimes \Bmat^\sigma_\mu$, and $\Bmat^\sigma_\mu = (1+\frac{\eps^2}{\Delta t})I_{N_v} - \Pmat^\sigma_\mu$, where $\Pmat^\sigma_\mu$ is the corresponding matrix for $\Pmat^\sigma$ with fixed $x$. Then the spectral structure of $\Bmat^\sigma_\mu$ is similar to $\Bmat_\mu$ in (\ref{eqn:B_v_expansion}) and we summarize as follows:
\begin{itemize}
\item The matrix $\Pmat^\sigma_\mu$ is low rank as it depends on the inner product of $\mu$ and $\mu'$. Denote its eigenvalues $\xi_1>  \xi_2  > \cdots > \xi_k$, $k\ll N_v$, and corresponding eigenvectors $v_1, \cdots, v_k$.
\item The largest eigenvalue of $\Pmat^\sigma_\mu$ is $\xi_1 = 1$, and corresponding eigenvector is $e = \frac{1}{\sqrt{N_v}}[1,\cdots, 1]^t$.
\item Denote the eigenvalues of $\Bmat_\sigma^\mu$ as $\lambda_1 \leq \lambda_2 \cdots \leq \lambda_{N_v}$. Then $\lambda_1 =  \frac{\eps^2}{\Delta t}$ and $\lambda_{k+1} = \lambda_{k+2} = \cdots \lambda_{N_v} =  1+\frac{\eps^2}{\Delta t}$. The eigenvector corresponding to $\lambda_1$ is $e$. Other eigenvalues depend on the form of $\sigma(\mu\cdot\mu')$.
\item The eigenvectors $v_j$, $j=k+1, \cdots N_v$ corresponding to the eigenvalues $\lambda_j = 1+\frac{\eps^2}{\Delta t}$ all satisfy $\Pmat_\sigma^\mu v_j = 0$.
\end{itemize}
Therefore, for any vector $g$ of size $N_v$, first find its projection to the first $k$ eigenvectors $v_1$, $\cdots$, $v_k$. That is, write $g = \sum_{i=1}^{N_v} c_i v_i$, and find $c_1$, $\cdots$, $c_k$. Then, we have
\begin{eqnarray} \label{B-aniso}
\Bmat^\sigma_\mu g = \sum_{i=1}^k c_k \lambda_k v_k + \left(1+\frac{\eps^2}{\Delta t}\right) (g-\sum_{i=1}^k c_k v_k),
\end{eqnarray} 
and 
\begin{eqnarray}\label{B-inv-aniso}
(\Bmat^\sigma_\mu)^{-1} g =\sum_{i=1}^k c_k \lambda_k^{-1} v_k + \frac{1}{1+\frac{\eps^2}{\Delta t}} (g-\sum_{i=1}^k c_k v_k).
\end{eqnarray} 
Since $k \ll N_v$, the computation (\ref{B-aniso}) and (\ref{B-inv-aniso}) are cheap. In fact, in most applications, the scattering $\sigma(\mu\cdot \mu')$ has special structures such that $v_2$, $\cdots$, $v_k$ are easy to find out. Then what is left is to precondition (\ref{imp-aniso}) as
\begin{eqnarray}
(\Bmat^\sigma)^{-1}(\Bmat^\sigma + \Cmat) f^{n+1} = b,
\end{eqnarray}
and solve the resulting linear system using GMRES.

\section{Properties}
We study the numerical properties of our scheme in this section. We will concentrate on the equation with isotropic collision operator and the extension to the anisotropic case is straightforward, which we will omit from here. 

\subsection{Conditioning of $\Bmat^{-1}(\Amat + \Bmat)$}
In the isotropic scattering case, it is easily seen that $\Bmat^{-1}$ only has two eigenvalues, $\frac{\Delta t}{\varepsilon^2}$ and $\frac{\Delta t}{\Delta t+\varepsilon^2}$. Considering $\Amat$ is the discretization of $-\frac{\varepsilon^2\Delta t}{\varepsilon^2 +\Delta t}\mu^2\partial_x^2$, the spectrum is roughly given by (depending on the boundary too):
\begin{equation}
-\frac{\varepsilon^2\Delta t}{\varepsilon^2 +\Delta t}\mu^2 k^2\,,\quad\text{with }k = 0, 1\,,\cdots N_x-1\,,
\end{equation}
and thus the smallest possible and the biggest possible eigenvalues of $\Bmat^{-1}\Amat + \Bmat$ are:
\begin{itemize}
\item{biggest:} $\frac{\Delta t}{\varepsilon^2}\cdot\frac{\varepsilon^2\Delta t^2}{\varepsilon^2+\Delta t^2}(N_x-1)^2$,
\item{smallest:} 0
\end{itemize}
The condition number of $\Bmat^{-1}(\Amat + \Bmat)$, therefore would be:
\begin{equation}
\kappa = \frac{1+\frac{\Delta t}{\varepsilon^2}\cdot\frac{\varepsilon^2\Delta t^2}{\varepsilon^2+\Delta t^2}N_x^2}{1+\frac{\Delta t}{\Delta t + \varepsilon^2}\cdot\frac{\varepsilon^2\Delta t^2}{\varepsilon^2+\Delta t^2}}\sim 1+\frac{1}{\Delta t}\,.
\end{equation}
Here we are in the regime of $\varepsilon\ll\Delta t$. The condition number would not produce much trouble in kinetic regime where $\varepsilon$ is a moderate number.

\subsection{Computational cost}
We first analyze the computational cost per each time step. Assume that we are given $f^n_\text{E}$ and $f^n_\text{O}$, we calculate the flops it takes to update for $f^{n+1}_\text{E}$ and $f^{n+1}_\text{O}$. As can be seen in Algorithm~\ref{alg:main}, $b$ is computed using ~\eqref{eqn:ABb} and $f_\text{O}$ is computed using~\eqref{eqn:f_O_new}, both of which require $\mathcal{O}(N_x)$ flops of computation per angular grid and thus $\mathcal{O}(N_xN_v)$ flops of computation in total. To compute $f^{n+1}_\text{E}$, the scheme calls Algorithm~\ref{alg:ABinverse}, in which $b_\text{mod}$ requires $N_xN_v$ flops, and conjugate gradient is used for $f^{n+1}_\text{E}$. Denote $\text{Tol}$ the tolerance set for the method, the number of iteration needed for CG is:
\begin{equation}
N_\text{iter}\geq \frac{\log{\text{Tol}}}{\log\left(\frac{\sqrt{\kappa}-1}{\sqrt{\kappa}+1}\right)}
\end{equation}
where $\kappa$ is the condition number for matrix $\Bmat^{-1}(\Amat + \Bmat)$. With the results above, the matrix is well-conditioned with $\kappa\sim 1$. and thus the required iteration is small. For example, if we set $\text{Tol} = 1e-10$ and $\Delta t = 1e-3$, then:
\begin{equation}
N_\text{iter}\gtrsim \frac{\log{\text{Tol}}}{\log\left(\frac{\frac{1}{2}\Delta t^3}{2+\frac{1}{2}\Delta t^3}\right)} \gtrsim \frac{\log{\text{Tol}}}{3\log \Delta t }=\frac{10}{9}\,.
\end{equation}
In each iteration in the conjugate gradient, the matrix multiplication of $\Bmat^{-1}(\Amat+\Bmat)$ needs to be applied three times. Since our scheme is matrix free (as shown in Algorithm~\ref{alg:ABproduct}), meaning we perform $\Amat$ product first, which requires $\mathcal{O}(N_xN_v)$ flops, and then perform $\Bmat^{-1}$ product, which also requires the same amount of flops, in total, each iteration in conjugate gradient requires $\mathcal{O}(N_xN_v)$ flops. Multiplied by $\mathcal{O}(1)$ iterations required by the CG method, $f^{n+1}_\text{E}$  is computed using $\mathcal{O}(N_xN_v)$ flops.

\subsection{Asymptotic preservation}
Now we check the asymptotic preservation property for the scheme~\eqref{scheme: pen1}. First taking the average of (\ref{scheme: pen1}), one has
\begin{eqnarray}\label{AP-1}
\frac{\rho^{n+1} - \rho^n}{\Dt} + \frac{1}{\eps}\average{ \mu \dx f^{n+1}} = 0.
\end{eqnarray}
Notice that 
\begin{eqnarray*}
f^{n+1} = \rho^{n+1} - \frac{\eps \mu}{\sigma} \dx f^{n+1} - \frac{\eps^2}{\sigma \Dt} (f^{n+1}-f^n).
\end{eqnarray*}
Plugging it into (\ref{AP-1}) leads to
\begin{equation} \label{AP-2}
\frac{\rho^{n+1} - \rho^n}{\Dt} +  \dx \left( \frac{1}{3\sigma} \dx \rho^{n+1} \right) = \mathcal{O}(\eps),
\end{equation}
which, upon sending $\eps$ to zero, is a semi-discrete implicit solver for the diffusion limit.

\subsection{Stability}
Next, we prove the stability of our scheme (\ref{scheme: pen1}). The result is summarized as follows. 
\begin{theorem}
The scheme (\ref{scheme: pen1}) is unconditionally stable. 
\end{theorem}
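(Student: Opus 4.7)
The plan is to establish unconditional stability via a standard $L^2$ energy estimate on the semi-discrete (in time) scheme (\ref{scheme: pen1}), treating the spatial and angular variables continuously, and assuming either periodic boundary conditions in $x$ or the standard inflow/reflective conditions that cause the boundary terms from the transport operator to vanish. First I would take the inner product of (\ref{scheme: pen1}) against $f^{n+1}$ over $(x,\mu)\in\Omega_x\times[-1,1]$, obtaining three terms: a time-difference term, the transport term $\frac{1}{\varepsilon}\int\!\!\int \mu f^{n+1}\partial_x f^{n+1}\,dx\,d\mu$, and the collision term $\frac{\sigma}{\varepsilon^2}\int\!\!\int f^{n+1}(\rho^{n+1}-f^{n+1})\,dx\,d\mu$.

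Next I would handle each term in turn. For the time term, the elementary identity $a(a-b)=\tfrac{1}{2}(a^2-b^2)+\tfrac{1}{2}(a-b)^2$ yields
\begin{equation*}
\int\!\!\int f^{n+1}\frac{f^{n+1}-f^n}{\Delta t}\,dx\,d\mu \;\geq\; \frac{1}{2\Delta t}\bigl(\|f^{n+1}\|^2-\|f^n\|^2\bigr),
\end{equation*}
where $\|\cdot\|$ is the $L^2$ norm on $\Omega_x\times[-1,1]$. For the transport term, I would write $\mu f^{n+1}\partial_x f^{n+1}=\tfrac{1}{2}\mu\,\partial_x(f^{n+1})^2$; integrating in $x$ and appealing to the boundary hypothesis gives zero. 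For the collision term, the key inequality is Cauchy--Schwarz in $\mu$: $\rho^2=\average{f}^2\leq\average{f^2}$, which upon multiplying by $2$ gives $2\rho^2\leq\int_{-1}^1 f^2\,d\mu$ and hence
\begin{equation*}
\int_{-1}^1 f^{n+1}\bigl(\rho^{n+1}-f^{n+1}\bigr)\,d\mu \;=\; 2(\rho^{n+1})^2 - \int_{-1}^1 (f^{n+1})^2\,d\mu \;\leq\; 0.
\end{equation*}

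Combining the three pieces (using $\sigma\geq 0$) yields $\|f^{n+1}\|^2\leq\|f^n\|^2$, which is the desired unconditional $L^2$ stability, independent of both $\Delta t$ and $\varepsilon$. The main subtlety to address carefully is the boundary treatment of the transport term; if non-trivial inflow data are prescribed one has to absorb the boundary flux into a modified energy or a Gronwall-style estimate, but for the standard periodic or zero-inflow setting the calculation closes cleanly. A secondary remark worth including is that the same argument applies verbatim after symmetric central-difference spatial discretization, because such a discretization preserves the telescoping cancellation of the advection term summation-by-parts identity.
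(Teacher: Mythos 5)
Your proof is correct and follows essentially the same route as the paper: multiply by $f^{n+1}$, use the identity $a(a-b)=\tfrac12\bigl(a^2-b^2+(a-b)^2\bigr)$ for the time term, kill the transport term by (anti)symmetry under periodic boundary conditions, and bound the collision term via Cauchy--Schwarz in $\mu$. The only difference is that you treat the vanishing of the transport term explicitly, which the paper leaves implicit.
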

\begin{proof}
Multiply the equation (\ref{scheme: pen1}) by $f^{n+1}$, and integrate with respect to $x$ and $v$, we have
\begin{eqnarray} \label{stability-1}
\frac{1}{2} \Big[ \Average{(f^{n+1})^2} - \Average{(f^{n})^2} +  \Average{(f^{n+1}-f^n)^2}   \Big] =\frac{ \sigma \Dt}{\eps^2} \Big( \Average{ \rho^{n+1} f^{n+1} }- \Average{(f^{n+1})^2}  \Big),
\end{eqnarray}
Here we use $\Average{\cdot}$ to denote the integration with respect to both $x$ and $v$ and the equality $a(a-b) = \frac{1}{2} \left[ a^2 - b^2 + (a-b)^2 \right]$. Considering $\Average{ \rho^{n+1} f^{n+1} } = \tnorm{(\rho^{n+1})^2}$, with $\tnorm{\cdot}$ denoting integration only on $x$, and by Cauchy-Schwartz inequality, we have:
\begin{equation}
\Average{ \rho^{n+1} f^{n+1}}\leq\Average{(f^{n+1})^2}\,,
\end{equation}
making the right hand side negative. Therefore
\begin{equation}
\Average{(f^{n+1})^2}\leq\Average{(f^n)^2}\,,
\end{equation}
regardless of the choice of $\Dt$ and thus unconditional stability is guaranteed. 
\end{proof}

\section{Numerical examples}
In this section, we present several numerical examples using our AP scheme described in Algorithm 1--3. Here we use uniform mesh for both spatial and velocity discretization. Periodic boundary condition is adopted in both directions.

\subsection{Condition number comparison}
We check the effectiveness of the preconditioner by comparing the condition number of $\Amat+\Bmat$ and $\Bmat^{-1}\Amat + \Imat$ with different $\varepsilon$. It can be seen from Table~\ref{table:cond_ep0_pre} that with moderate $\varepsilon$, the condition number of $\Amat + \Bmat$ is relatively small. However, as $\varepsilon$ shrinks to zero, the condition number grows drastically, as displayed in the left three columns of Table~\ref{table:cond_ep5_pre}. This condition number is adequately reduced by the pre-conditioner $\Bmat^{-1}$, which is represented in the last three columns of Table~\ref{table:cond_ep5_pre}.
\begin{table}
\centering
 \begin{tabular}[ht]{|l|c|c|c|}
\hline
\multicolumn{4}{|c|}{Condition number for $\Amat + \Bmat$ with $\varepsilon = 1$.} \\
\hline
& $N_v = 10$ & $N_v = 20$ & $N_v = 30$\\
$N_x = 20$ & 1.34748186880472 & 1.38654551078317 & 1.40000848404069\\
$N_x = 40$ & 1.35413818003715 & 1.39425840610708 & 1.40810090542130\\
$N_x = 60$ & 1.35618972627881 & 1.39664857130697 & 1.41061251454211\\
$N_x = 80$ & 1.35718028325749 & 1.39780546942190 & 1.41182902863886\\
$N_x = 100$ & 1.35776282383554 & 1.39848680698068 & 1.41254576051502\\
\hline
 \end{tabular}
\caption{Condition number for $\Amat + \Bmat$ with $\varepsilon = 1$.}\label{table:cond_ep0_pre}
 \end{table}

\begin{table}
\centering
\begin{tabular}[ht]{|l|c|c|c|}
\hline
\multicolumn{4}{|c|}{Condition number for $\Amat + \Bmat$.} \\
\hline
& $N_v = 10$ & $N_v = 20$ & $N_v = 30$\\
$N_x = 20$ &1.59e8 & 1.59e8 & 1.59e8\\
$N_x = 40$ &8.12e7 & 8.12e7 & 8.12e7\\
$N_x = 60$ &5.46e7 & 5.46e7 & 5.46e7\\
$N_x = 80$ &4.11e7 & 4.11e7 & 4.11e7\\
$N_x = 100$ & 3.30e7 & 3.30e7 & 3.30e7\\
\hline
\end{tabular}
\begin{tabular}[ht]{|c|c|c|}
\hline
\multicolumn{3}{|c|}{Condition number for $\Bmat^{-1}\Amat + \Imat$.} \\
\hline
$N_v = 10$ & $N_v = 20$ & $N_v = 30$\\
15.88 & 16.14 & 16.19\\
31.50 & 32.04 & 32.14\\
47.12 & 47.94 & 48.09\\
62.74 & 63.84 & 64.05\\
78.37 & 79.75 & 80.00\\
\hline
\end{tabular}
\caption{Condition number for $\Amat + \Bmat$ and $\Bmat^{-1}\Amat + \Imat$ with $\varepsilon = 10^{-5}$.}\label{table:cond_ep5_pre}
\end{table}

\subsection{Computational cost comparison}
In this subsection we compare the computational cost for updating $f_\text{E}$ by only one time step. We update $f_\text{E}$ by both directly inverting the matrix and using CG method. In Table~\ref{table:cost_ratio} we show the ratio of the time used. With $N_x = 200$ and $N_v=20$, the time it takes by directly inverting the matrix is $130$ more than utilizing the CG method. Note that here we only report the saving from the online computation, assuming the computer could store the matrix. In practice, one also needs to consider the memory cost for directly inverting the matrix. The new AP solver, on the other hand, is a matrix-free method, and there is no need to store matrices.
\begin{table}
 \centering
 \begin{tabular}[ht]{|c|c|c|c|c|}
  \hline
  \multicolumn{4}{|c|}{Computational Cost $t_\text{inverse}/t_\text{cg}$.}\\
  \hline
&$\varepsilon = 1$ & $\varepsilon = 0.1$ & $\varepsilon = 0.01$\\
 $N_x = 50$ & 1.5733  &  2.6637  &  3.1782\\
    $N_x = 100$ & 54.3670  &  3.0818 &   4.4672\\
    $N_x = 150$ & 38.2581 &  13.1954 &   6.2918\\
   $N_x = 200$ & 130.6267 &  12.1222 &  10.9596\\
  \hline
 \end{tabular}
\caption{Computational cost ratio between directly inverting the matrix, and utilizing the conjugate gradient method.}\label{table:cost_ratio}
\end{table}

\subsection{One dimensional problem with isotropic scattering}
In this section, we test the efficiency of our numerical schemes in slab geometry. Here $x\in [0,2]$. In all examples, $\Dt $ is chosen to be $\Dx/3$ when using our scheme. 
\begin{itemize}
\item Example I:  
We consider the following initial condition 
\begin{eqnarray} \label{ic}
f(0,x, \mu) = \left\{ \begin{array}{ll}
2 &  0.8<x<1.2
\\ 0 &  \textrm{otherwise, }
\end{array}\right.
\end{eqnarray}
and isotropic scattering with vanishing cross section (see the left figure of Fig.~\ref{fig: cross-section})
\begin{equation} \label{ex1-xsection}
\sigma(x) = 100(x-1)^4.
\end{equation} 
For $\eps = 1$, we compare the solution using our scheme with the solution from an explicit solver. And the profile of $\rho$ at time $t_\textrm{max} = 1$ is displayed in the left figure of Fig.~\ref{fig: 1D1}. Here we choose $N_x = 200$ and $N_v = 100$. In the diffusion scaling, we choose $\eps = 10^{-3}$ and compare our solution with the solution of the diffusion limit (\ref{eqn:average}) at time $t_\textrm{max} = 0.1$, as shown in the right figure of Fig.~~\ref{fig: 1D1}. Good agreement are observed for both kinetic and diffusion regimes. Moreover, near the center of the computational domain where the scattering effect is extremely weak, our scheme successfully pick up the correct profile of density in the free streaming limit. 

\begin{figure}[!h]
\includegraphics[width=0.46\textwidth]{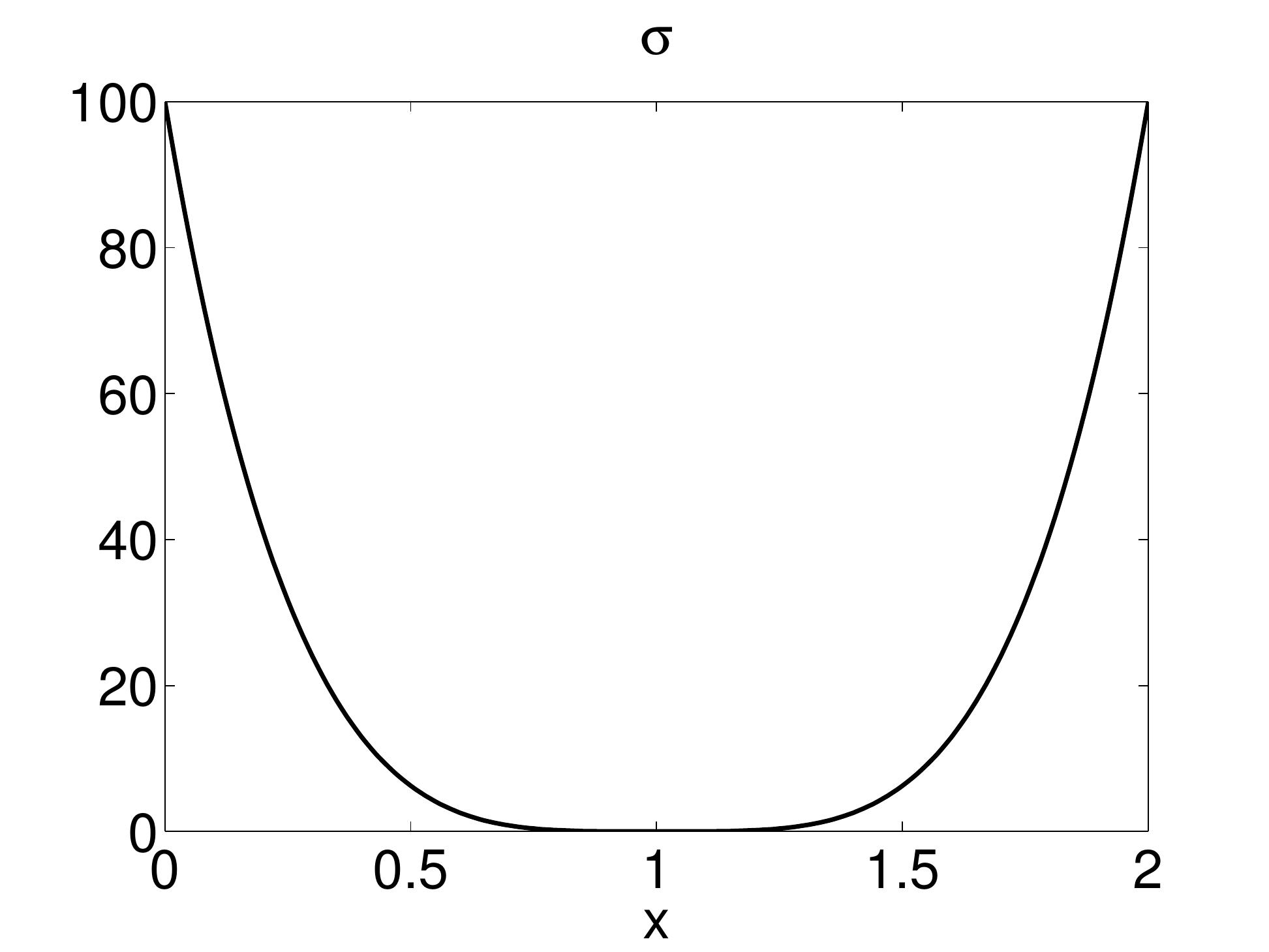}
\includegraphics[width=0.46\textwidth]{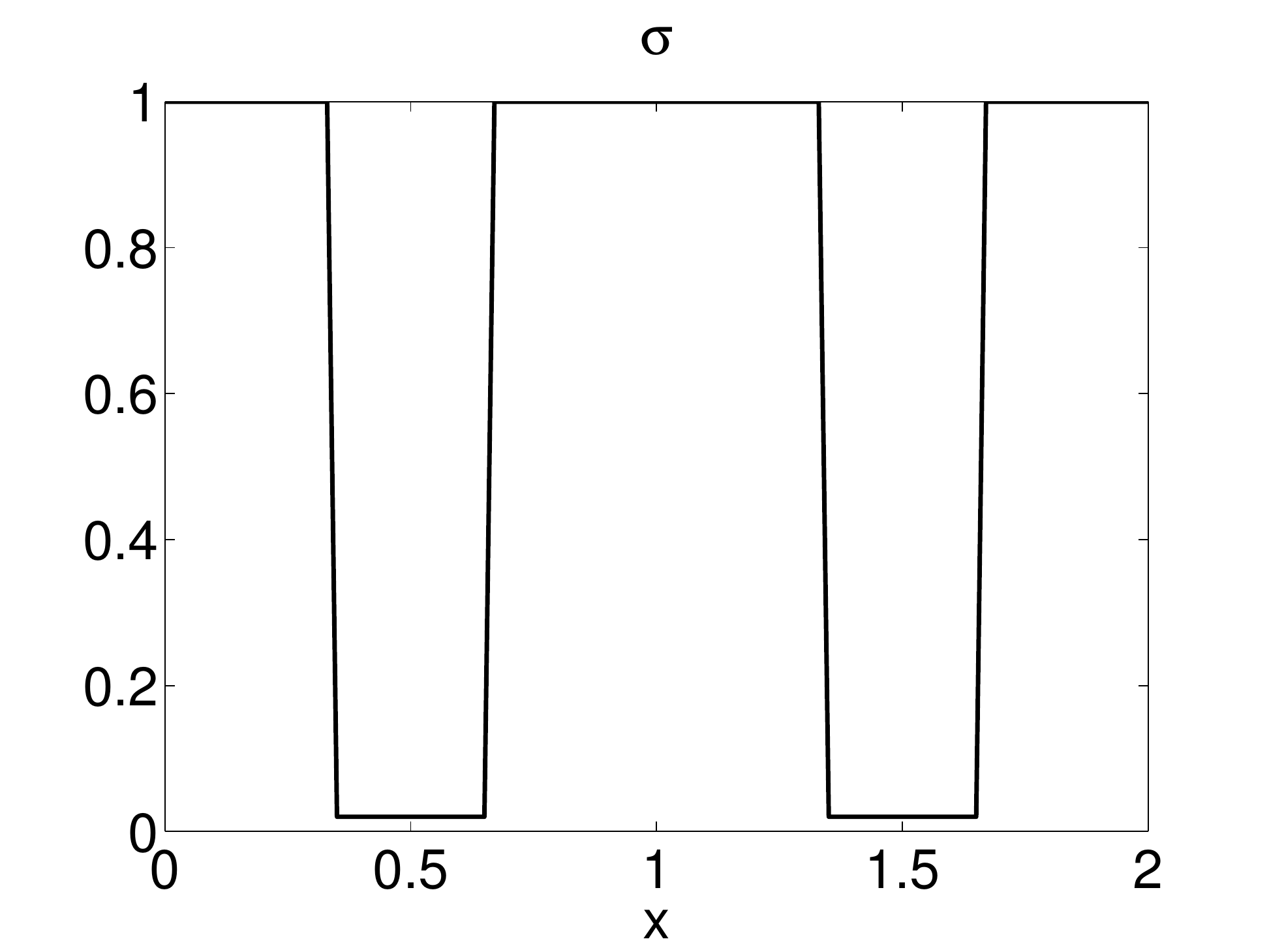}
\caption{Scattering cross section $\sigma$ for one dimensional examples. Left: vanishing cross-section (\ref{ex1-xsection}). Right: striped cross-section (\ref{ex2-xsection}). } \label{fig: cross-section}
\end{figure}

\begin{figure}[!h]
\includegraphics[width=0.46\textwidth]{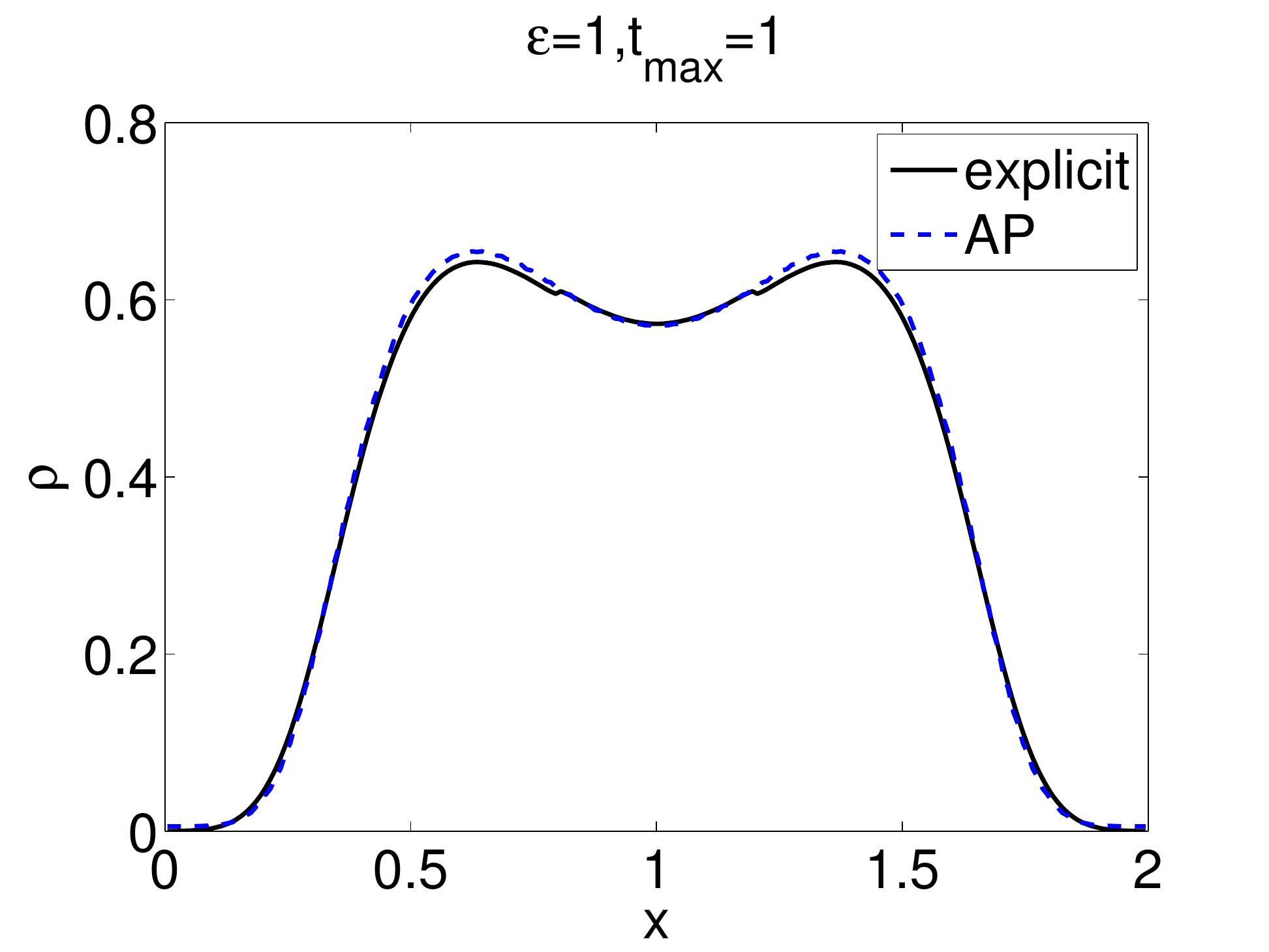}
\includegraphics[width=0.46\textwidth]{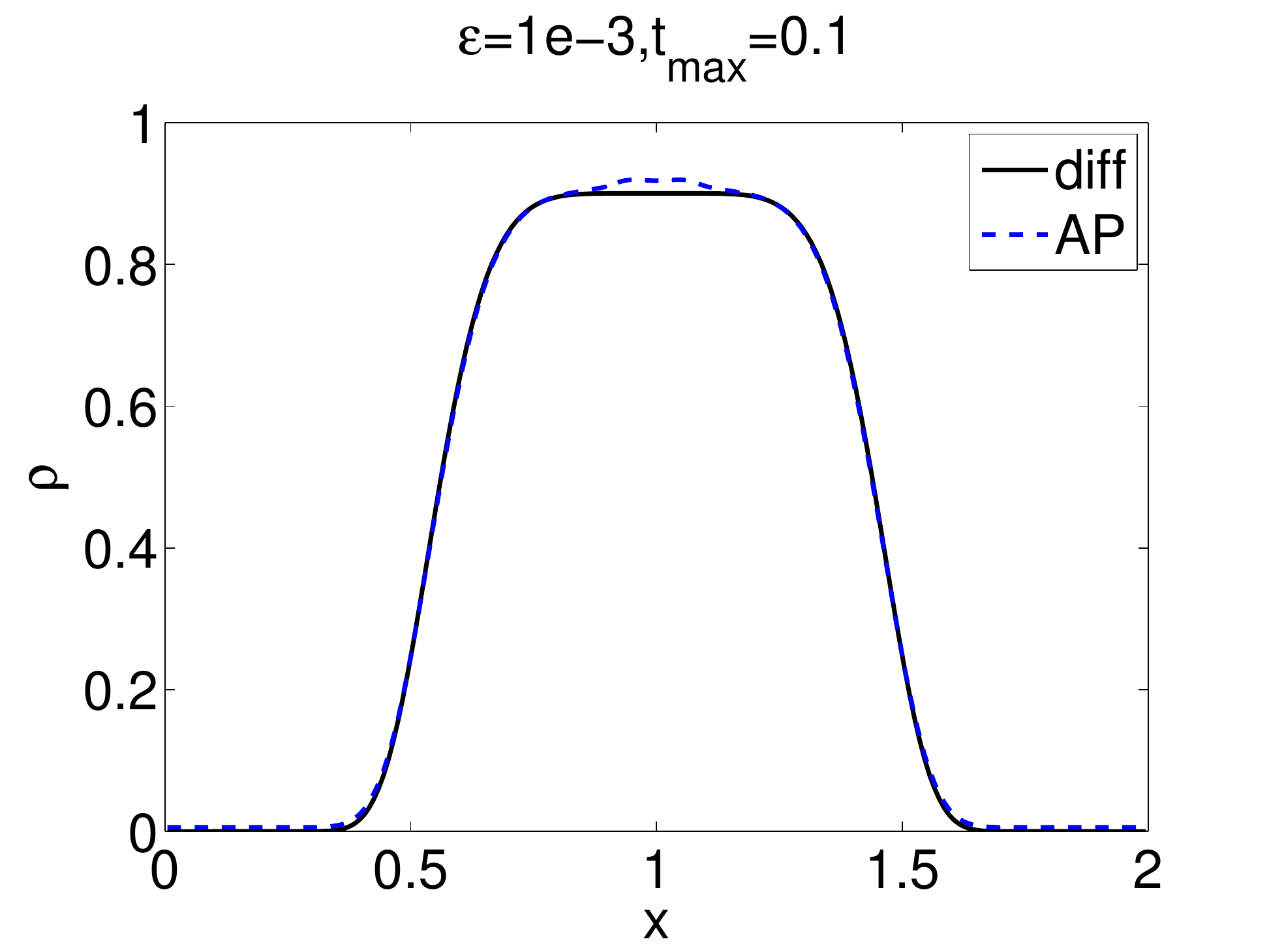}
\caption{Example I. Here $N_x = 200$, $N_v = 100$. Left: $\eps = 1$, we compare the density $\rho$ using our scheme (blue dashed curve) with the solution using explicit solver (black solid curve) at $t_\text{max} = 1$. Right: $\eps = 10^{-3}$, we compare the density $\rho$ using our scheme (blue dashed curve) with the solution to the diffusion limit (black solid curve) at $t_\text{max} = 0.1$.} \label{fig: 1D1}
\end{figure}

\item Example II: Initial condition is the same as in (\ref{ic}) and we consider striped cross section 
\begin{eqnarray} \label{ex2-xsection}
\sigma(x) = \left\{ \begin{array}{cc} 0.02 \quad &x\in [0.35, 0.65] \cup [1.35, 1.65] 
\\ 1 \quad &x \in [0, 0.35) \cup (0.65, 1.35) \cup(1.65,1] 
\end{array} \right.
\end{eqnarray}
so that the particles will transport from high scattering regime to low scattering regime and vice versa. 
Again we compute our solution for both $\eps = 1$ and $\eps = 10^{-3}$. In the former case, the solution is compared with the one using explicit solver, while in the later case, it is compared with the one using diffusion solver. The results are gathered in Fig. ~\ref{fig: 1D2}. This example validates the efficiency of our scheme in computing the transport equation with discontinuous cross-section, which is often the case in many real materials. 
 \begin{figure}[!h] 
\includegraphics[width=0.46\textwidth]{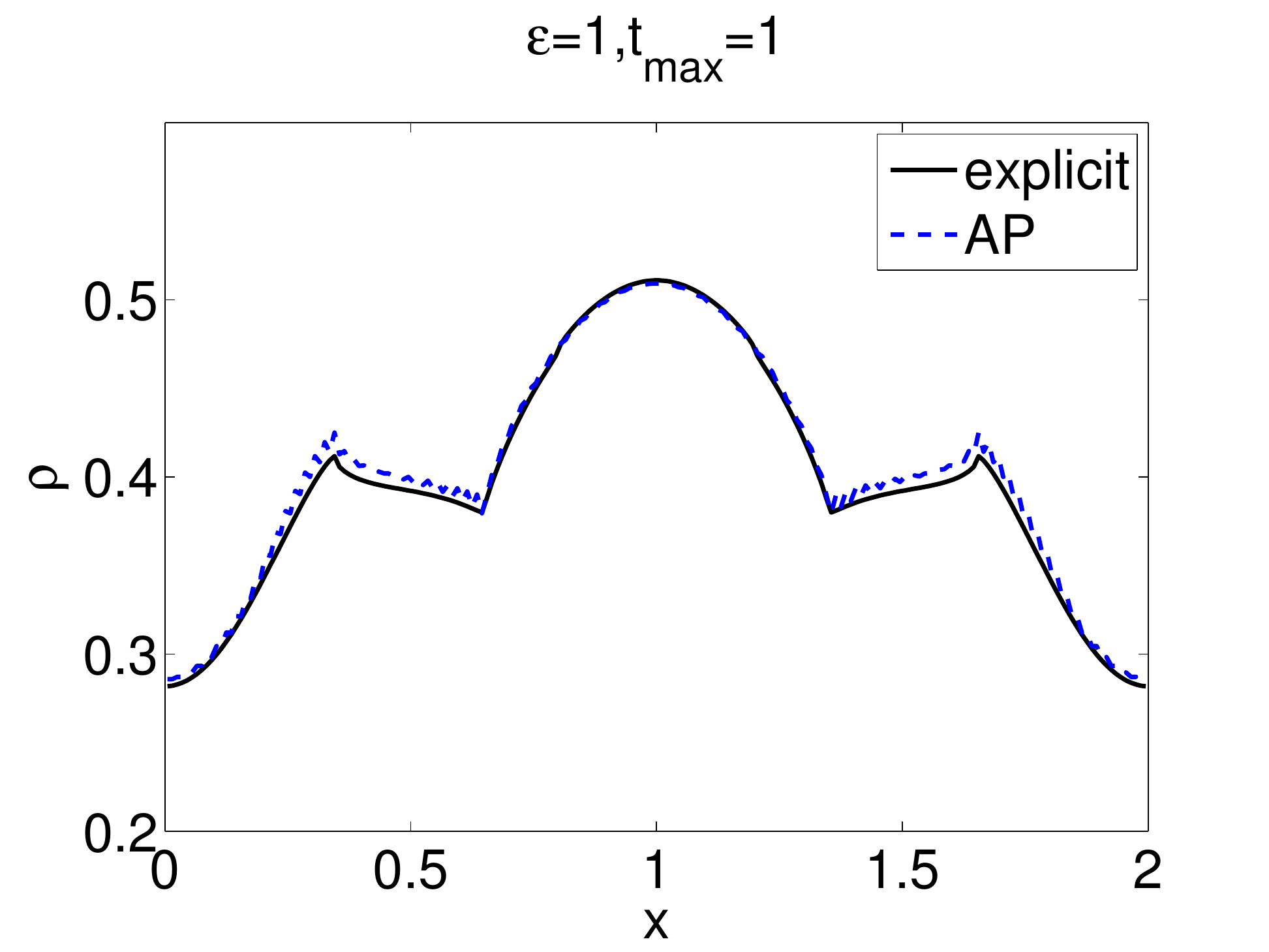}
\includegraphics[width=0.46\textwidth]{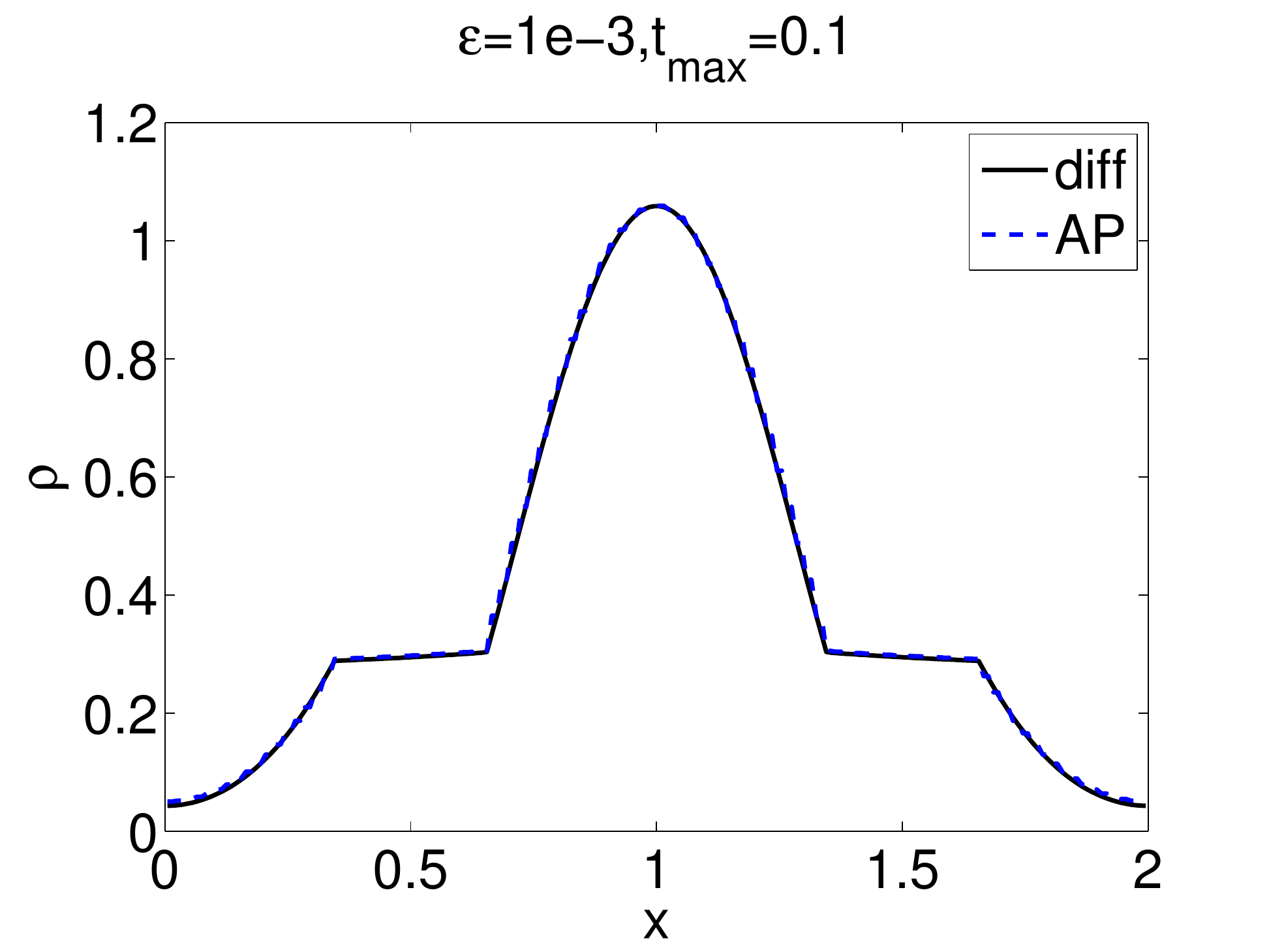}
\caption{Example II. Here $N_x = 200$, $N_v = 100$. Left: $\eps = 1$, we compare the density $\rho$ using our scheme (blue dashed curve) with the solution using explicit solver (black solid curve) at $t_\text{max} = 1$. Right: $\eps = 10^{-3}$, we compare the density $\rho$ using our scheme (blue dashed curve) with the solution to the diffusion limit (black solid curve) at $t_\text{max} = 0.1$. }\label{fig: 1D2}
\end{figure}

\item Example III:  Here we consider anisotropic scattering cross section 
\begin{eqnarray}
\sigma(x,\mu,\mu') = \sigma_0(x) \left( 1+  \mu \cdot \mu' \right)
\end{eqnarray}
with $\sigma_0(x)$ taking the form of (\ref{ex2-xsection}). For this special choice of $\sigma$, the diffusion limit (\ref{diff-aniso}) reduces to 
\begin{equation}
\rho_t + \frac{1}{2} \partial_x  \left( \frac{1}{\sigma_0} \partial_x \rho \right) = 0.
\end{equation}
Also, the eigenvalues of $\Bmat^\sigma_\mu$ are $\lambda_1 = \frac{\eps^2}{\Delta t}$, $\lambda_2 = 1+\frac{\eps^2}{\Delta t}-\frac{1}{3}$, and corresponding normalized eigenvectors are $v_1 = e$ and $v_2 = \frac{\mu}{\sqrt{\mu^t\mu}}$. And the rest eigenvalues are all equal to $ 1 + \frac{\eps^2}{\Delta t}$. Then for any vector $g = \sum_{i=1}^{N_v} c_i v_i$, notice that $\Pmat^\sigma_\mu g = c_1 v_1 + \frac{1}{3}c_2 v_2$, and $v_1$ is orthogonal to $v_2$, we have, $c_1 = v_1^t \Pmat^\sigma_\mu g$ and $c_2 = 3 v_2^T \Pmat^\sigma_\mu g$. Since $\Pmat^\sigma_\mu$ is low rank, the matrix-vector multiplication $\Pmat^\sigma_\mu g$ is cheap. Once $c_1$ and $c_2$ are computed, we can use the formula (\ref{B-inv-aniso}) with $k=2$ to compute the inverse of $\Bmat^\sigma_\mu$ and the rest steps are the same as isotropic ones. 

To illustrate, Fig. \ref{ex-3} on the left compares our solution with the solution using explicit scheme, and on the right compares with the solution to the diffusion limit. Here $\Delta x = 0.01$ and $\Delta v = 0.02$. 

\begin{figure} 
\includegraphics[width=0.46\textwidth]{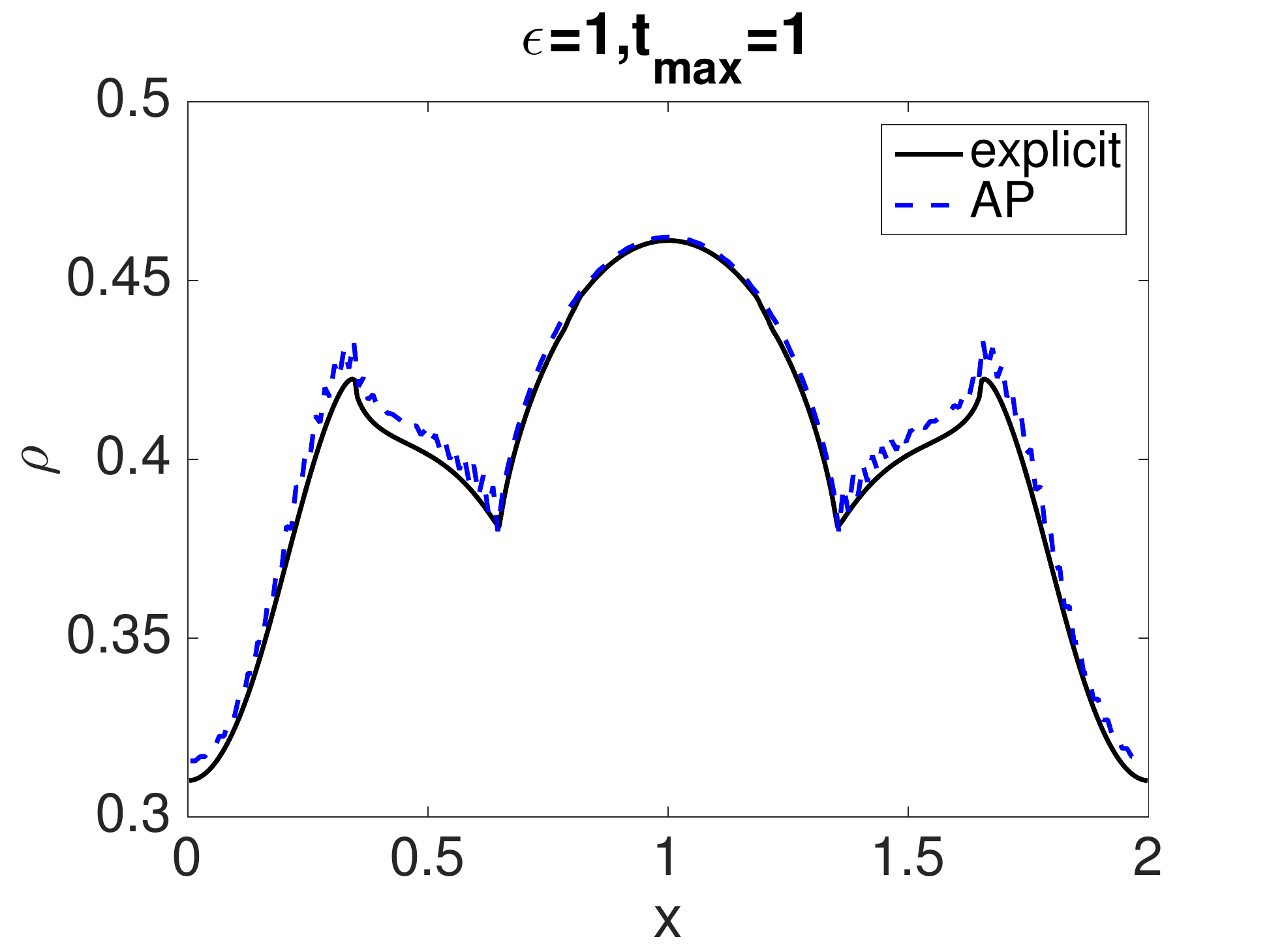}
\includegraphics[width=0.46\textwidth]{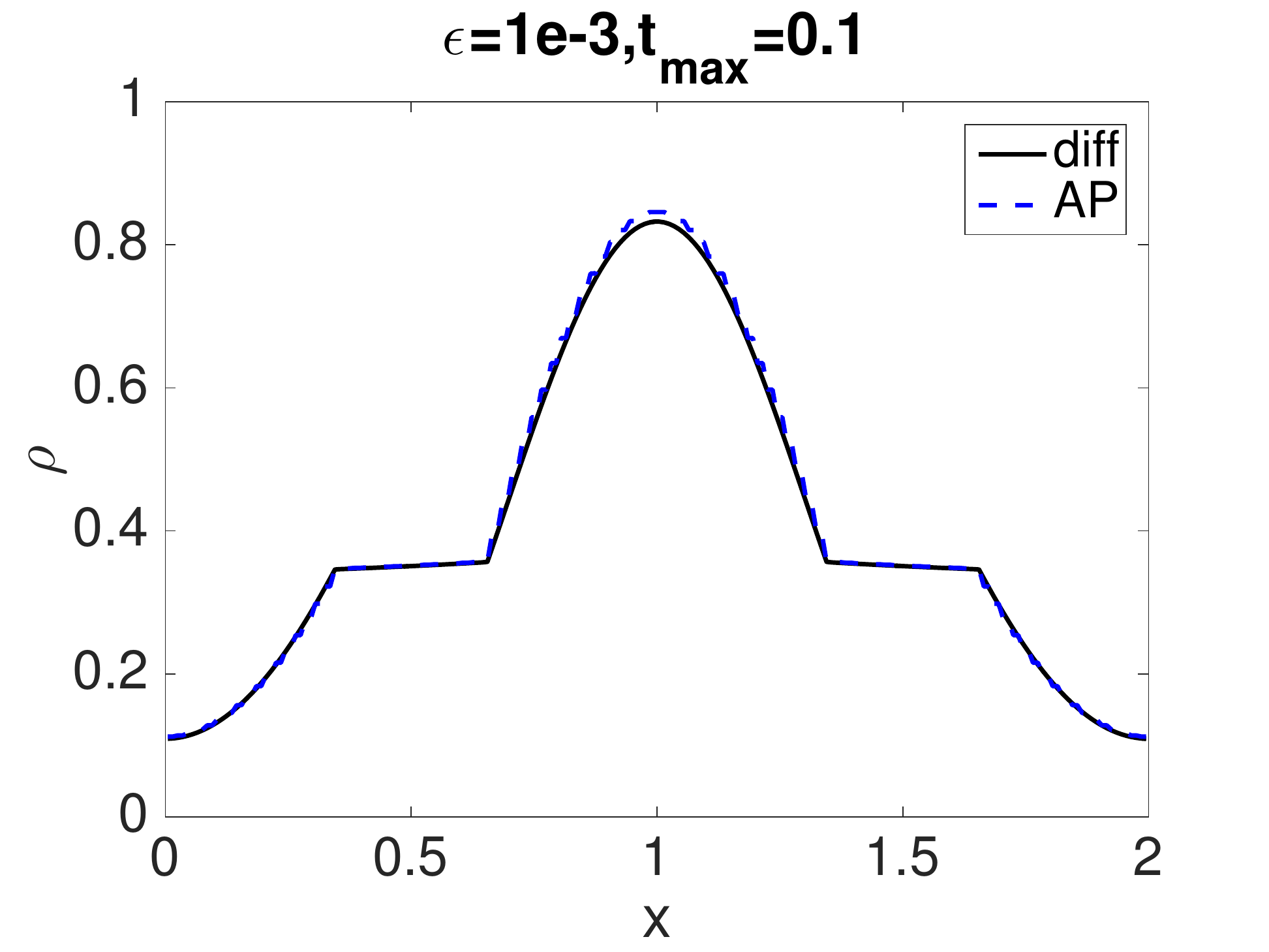}
\caption{Example III: anisotropic case. Here $N_x = 200$, $N_v = 100$. Left: $\eps = 1$, we compare the density $\rho$ using our scheme (blue dashed curve) with the solution using explicit solver (black solid curve) at $t_\text{max} = 1$. Right: $\eps = 10^{-3}$, we compare the density $\rho$ using our scheme (blue dashed curve) with the solution to the diffusion limit (black solid curve) at $t_\text{max} = 0.1$. } \label{ex-3}
\end{figure}

\end{itemize}

\subsection{Two-dimensional problems}
Finally we test our scheme in planar geometry. 
\begin{itemize}
\item Example V: Consider smooth initial condition 
\begin{equation}\label{ic-2D}
f(0,x,y,\xi,\eta) = 1+ e^{-40(x-0.5)^2 - 40(y-0.5)^2}, \quad 0 \leq x, y \leq 1
\end{equation}
and uniform cross section $\sigma(x,y) \equiv 1$. Here we check the asymptotic property of our scheme by computing the $l^2$ distance between $f$ and $\rho$, namely 
\begin{equation} \label{AP error}
|f-\rho|_2 = \sqrt{ \sum_i \sum_j |f(x_i,v_j) - \rho(x_i)|^2 \Dx \Delta \mu}
\end{equation}
with various $\eps$ along time, and the results are collected in Fig.~\ref{fig: AP error} left. As expected, this error decreases with $\eps$. And Fig.~\ref{fig: AP error} on the right further confirms the asymptotic property by comparing the density using our kinetic solver with that using a diffusion solver
\begin{equation} \label{AP error2}
|\rho_\text{k} - \rho_\text{d}|_2 = \sqrt{ \sum_i  | \rho_\text{k}(x_i) - \rho_\text{d}(x_i)|^2 \Dx }.
\end{equation}

\begin{figure}[!h] 
	\includegraphics[width=0.46\textwidth]{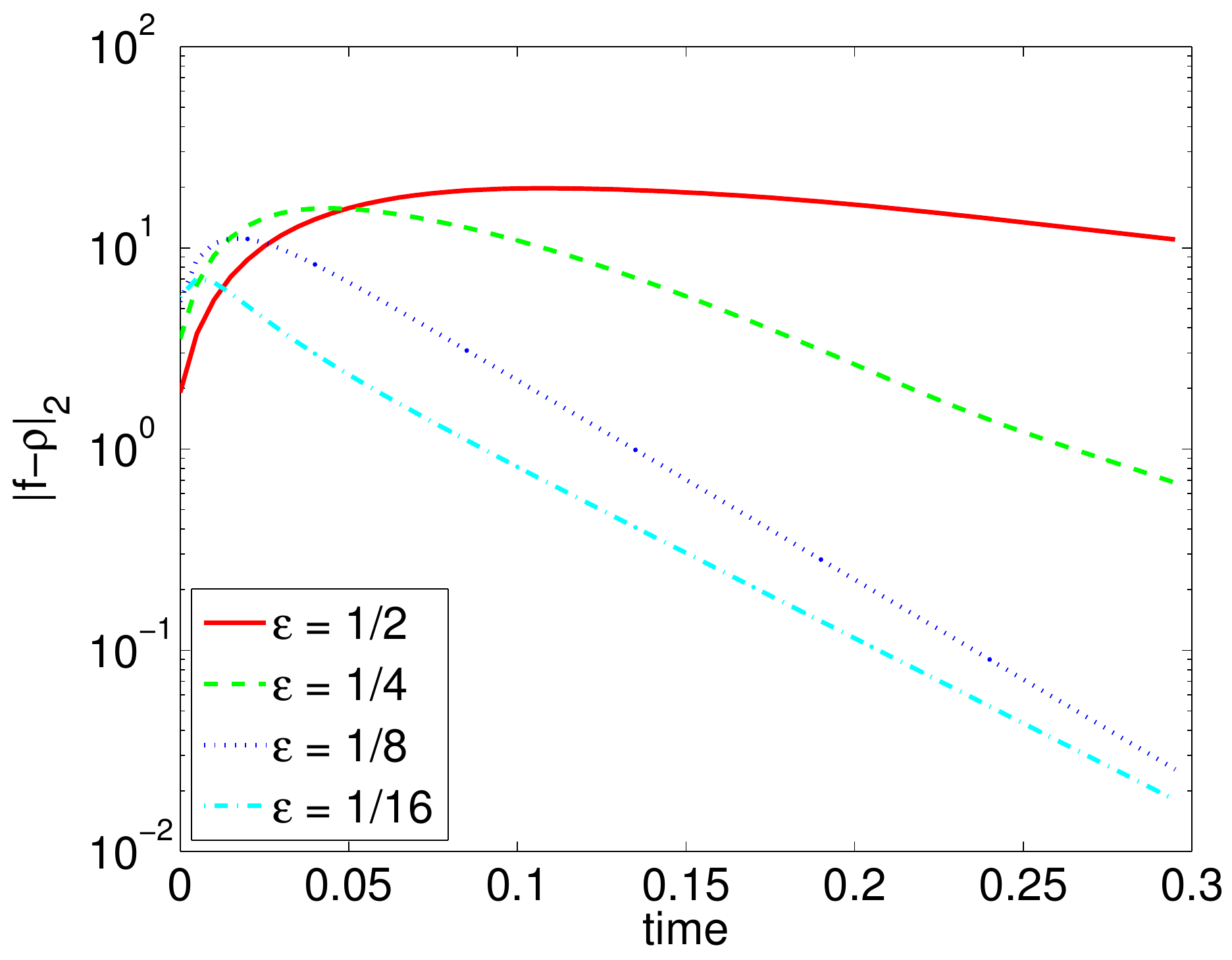}
	\includegraphics[width=0.45\textwidth]{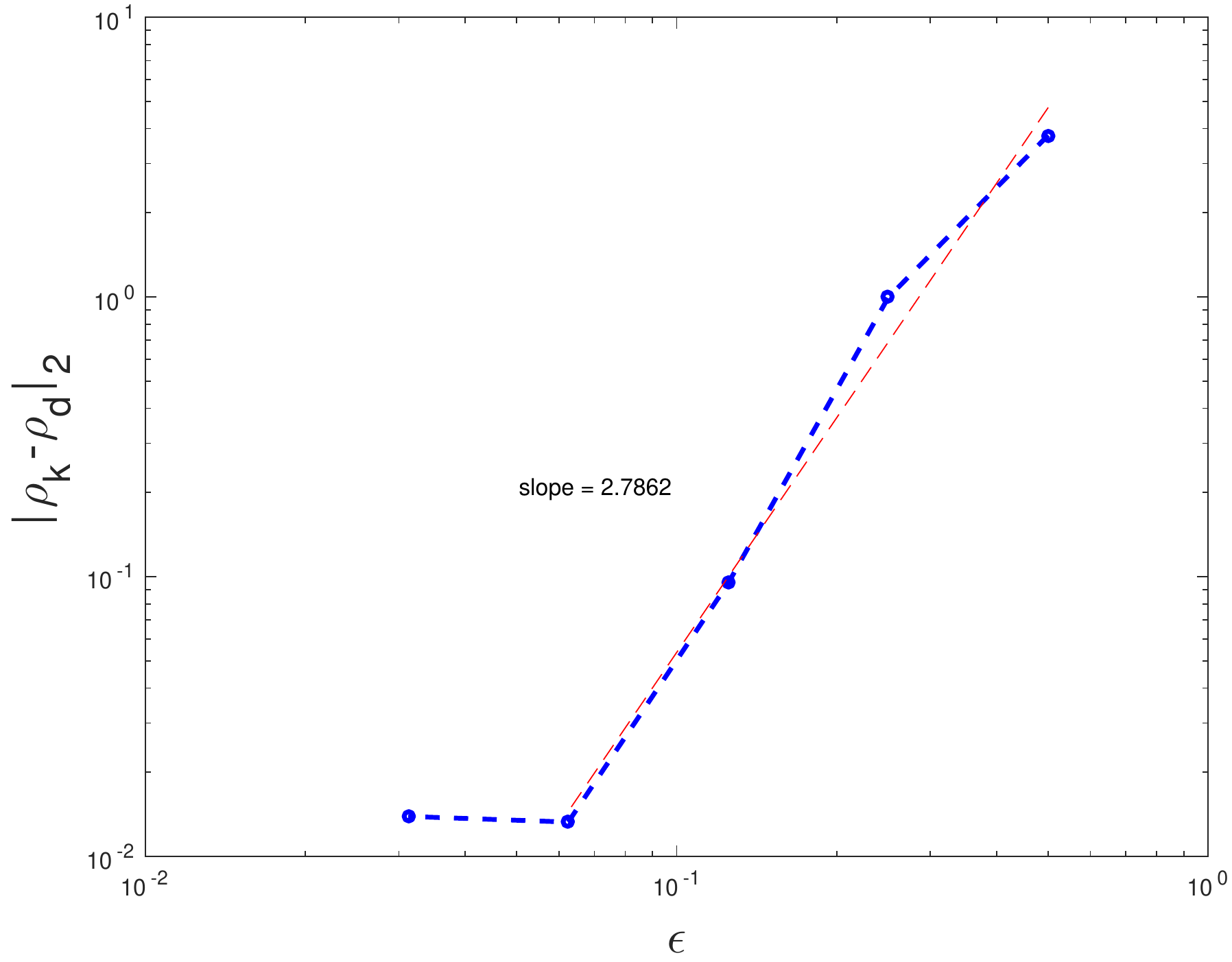}
	\caption{Example V. Left:  $l^2$ distance between $f$ and $\rho$ (\ref{AP error}) versus time using our kinetic solver. Right: $l^2$ distance between $\rho_k$, the density of the kinetic and $\rho_d$, the density to the diffusion equation. The error saturates at the last point of computation. }\label{fig: AP error}
\end{figure}

\item Example VI: Initial data is the same as in (\ref{ic-2D}) and scattering cross-section takes the following form
\begin{eqnarray} \label{exp3-xsection}
\sigma(x,y) = \left\{ \begin{array}{cc} 0.02 \quad &(x,y) \in [0.25,0.35] \times[0.25,0.35] \cup [0.65,0.75] \times[0.65,0.75] 
\\ 1 \quad & \textrm{elsewhere}
\end{array} \right.
\end{eqnarray}
which again contains both strong and weak scattering regimes (see Fig.~\ref{fig: 2D1}) on the upper left.  Let $\eps = 1e-4$, we compare the solution using our scheme with the solution to the diffusion limit at time $t_\textrm{max} = 0.1$, which is displayed in Fig.~\ref{fig: 2D1} with good agreement.

\begin{figure}[!h] 
      \includegraphics[width = 0.45\textwidth]{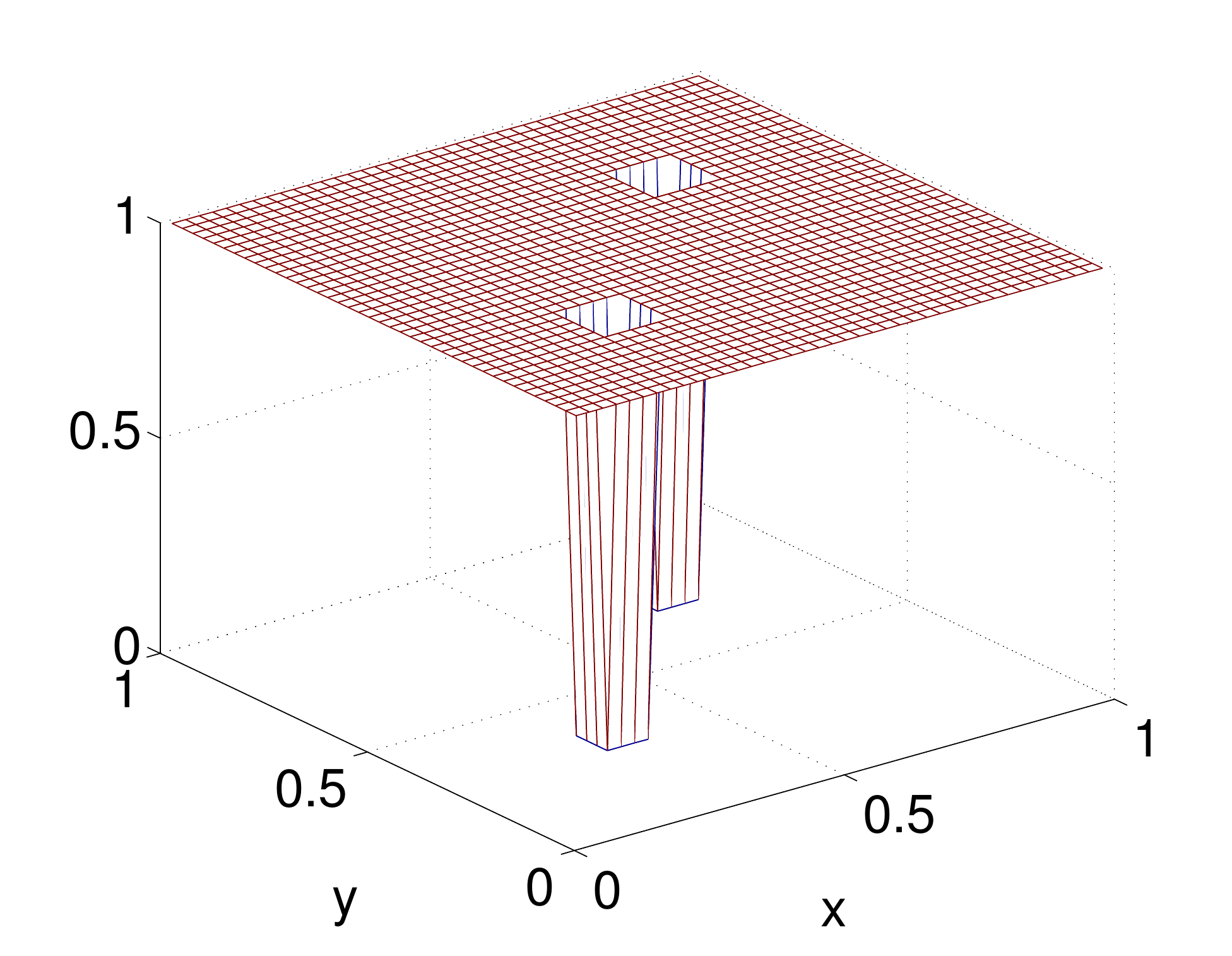}
       \includegraphics[width=0.45\textwidth]{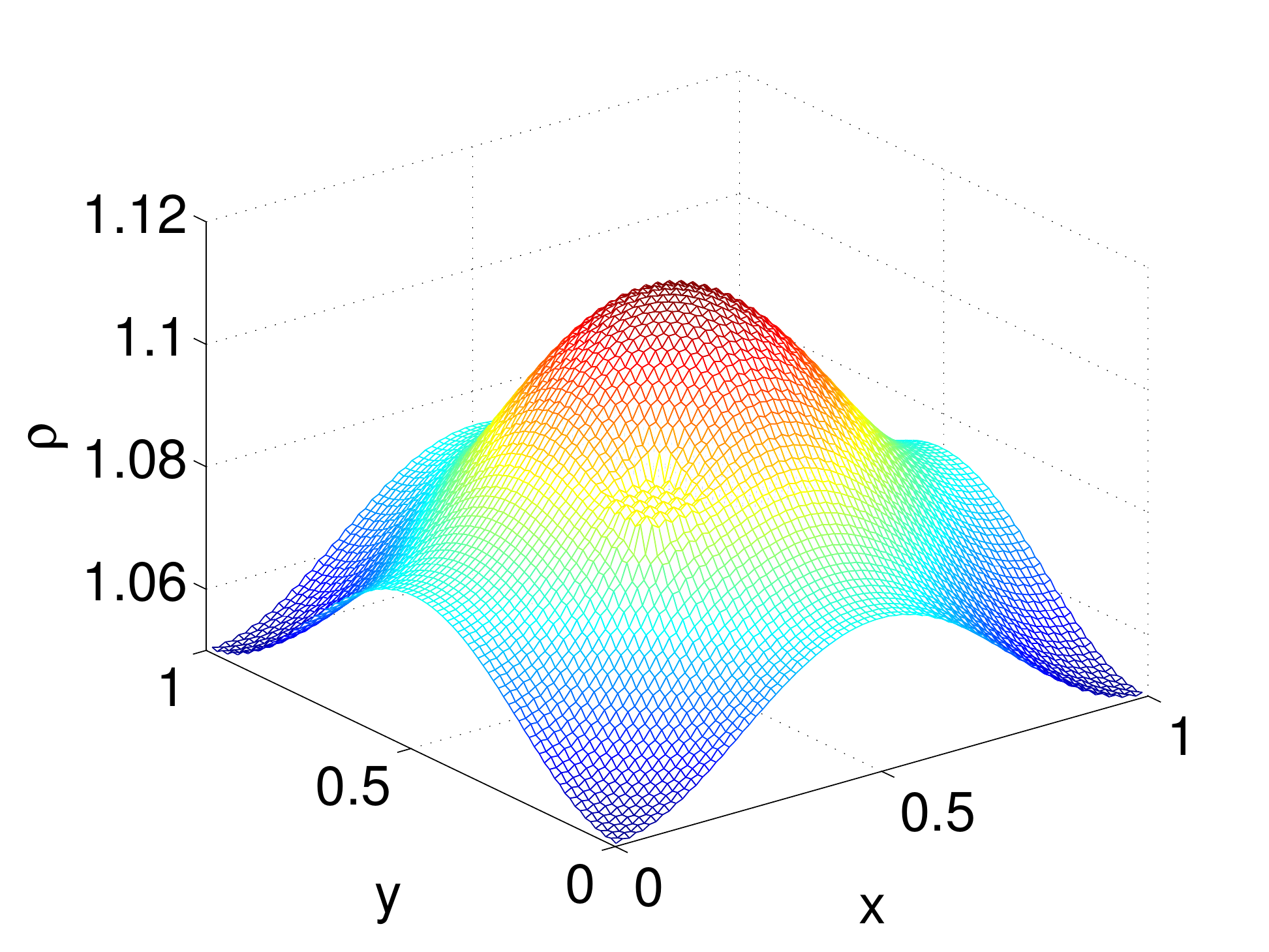}
       \\
	\includegraphics[width=0.45\textwidth]{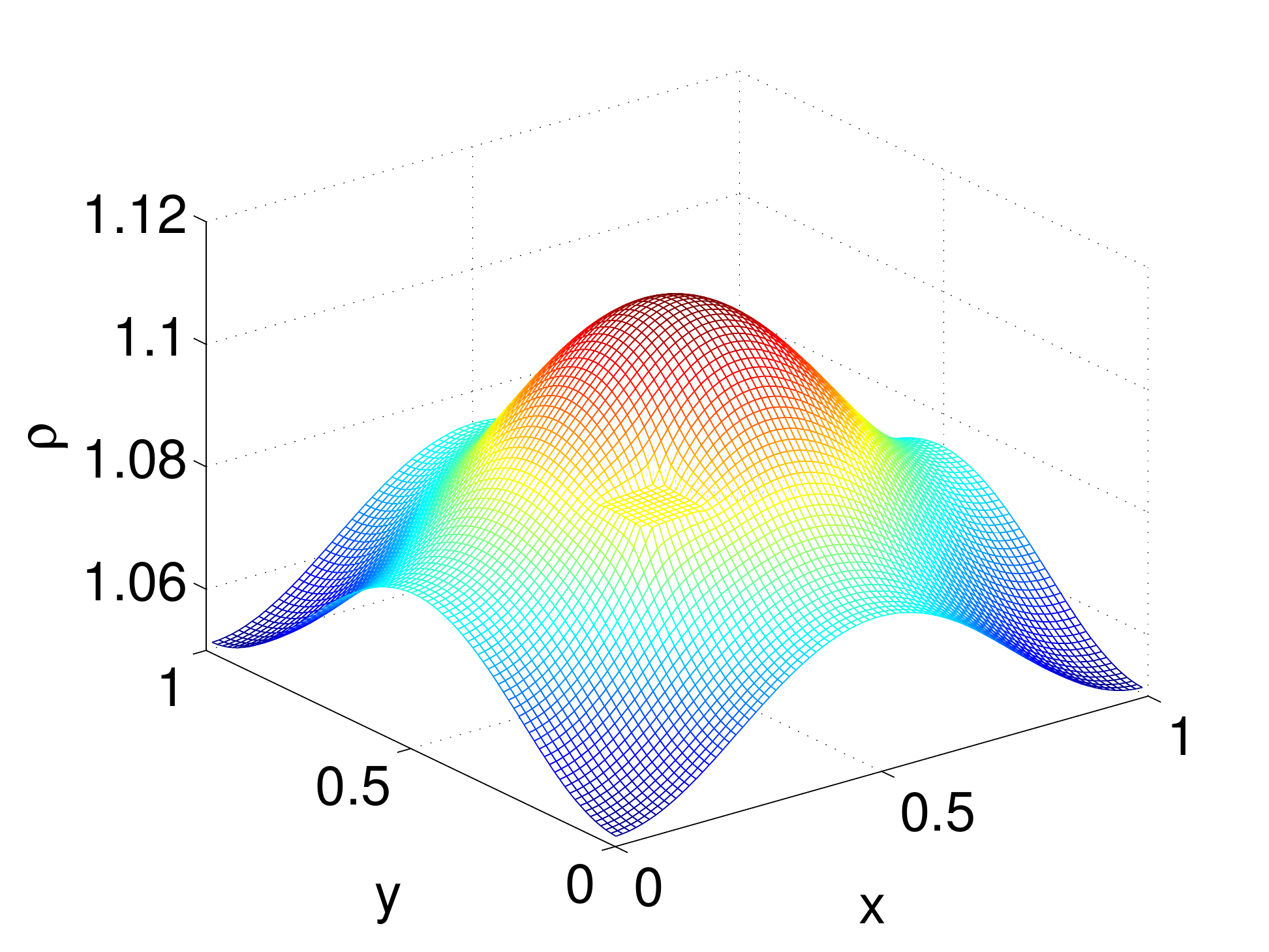}	
	\includegraphics[width=0.45\textwidth]{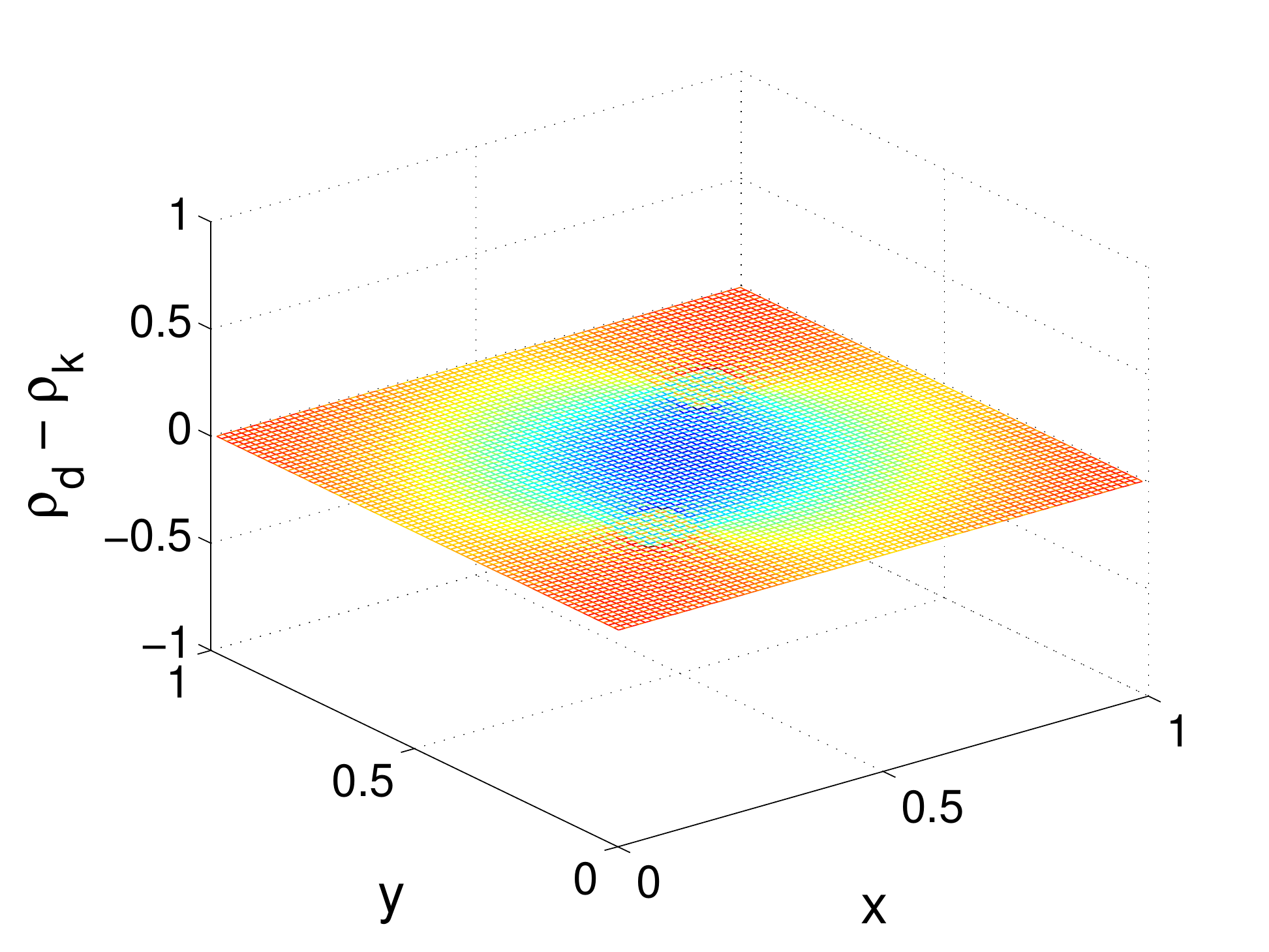}
	\caption{Example VI. Upper Left: Cross section (\ref{exp3-xsection}). Upper right: solution with our kinetic solver. Lower left : solution to the diffusion limit (\ref{diff_2D}). Lower right: difference between the two. Here $N_x = N_y = 80$, $N_v = 10$ and $\eps = 10^{-4}$. }\label{fig: 2D1}
\end{figure}

\end{itemize}

\section{Conclusion}
In this paper, we designed a fast solver for the fully implicit treatment of the linear transport equation. When the scattering effect is strong, this equation exhibits diffusive scaling such that both the convection and collision become stiff. On the other hand, when the scattering is very weak, the photon dynamics will be dominated by a free transport at the speed of light. In either case, numerically solving the equation requires a special care to deal with the stiffness. The fully implicit time discretization we considered here effectively treat the stiffness without resolving the mesh size, but at the cost of generating a large algebraic system that needs to invert, which is also ill-conditioned and not necessarily symmetric. We propose an efficient pre-conditioner which significantly improve and condition number and allows matrix-free treatment. The key ingredient is to use the spectral structure for the collision operator, which is also the source of ill-conditioning, to compute the pre-conditioner. We also reformulate the system via an even-odd parity so that the resulting linear system is symmetric and positive definite that can be inverted using conjugate gradient method with ease. A asymmetric version is also available and can be inverted through Krylov method such as GMRES. A major benefit of our new method is that it does not depend on the specific form of spatial or angular discretization, therefore it can be used with great generality. In the near future, we will generalize this method to nonlinear transport equation. 

\bibliographystyle{siam}
\bibliography{RTE}

\end{document}